\numberwithin{equation}{section}
\newtheorem{theorem}{Theorem}[section]
\newtheorem{lemma}{Lemma}[section]
\newtheorem{follow}{Corollary}[section]
\newtheorem{pr}{Proposition}[section]
\theoremstyle{definition}
\def\beq{\begin{equation}}
\def\eeq{\end{equation}}
\newcommand{\bea}{\begin{eqnarray}}
\newcommand{\eea}{\end{eqnarray}}
\newcommand{\beas}{\begin{eqnarray*}}
\newcommand{\eeas}{\end{eqnarray*}}
\newcommand{\one}{\mathds{1}}
\newcommand{\bel}{\begin{equation} \label}
\newcommand{\ee}{\end{equation}}
\newcommand{\supp}{{\text{supp}}}
\newcommand{\rd}{{\mathbb R}^{2}}
\newcommand{\re}{{\mathbb R}}
\newcommand{\C}{{\mathbb C}}
\newcommand{\N}{{\mathbb N}}
\newcommand{\Z}{{\mathbb Z}}
\newcommand{\ttv}{T_V}
\newcommand{\harmo}{{\mathcal H}(\Omega)}
\newcommand{\eps}{{\varepsilon}}
\newcommand{\eeps}{{\epsilon}}
\begin{document}
\title[Harmonic Toeplitz Operators and Krein Laplacian]{Spectral Properties of Harmonic Toeplitz Operators\\
and Applications to the Perturbed Krein Laplacian}

\author[V. Bruneau]{Vincent Bruneau}
\address{Institut de Math\'ematiques de Bordeaux, UMR 5251 du CNRS,
Universit\'e de Bordeaux, 351 cours de la Lib\'eration, 33405 Talence cedex, France}
\email{vbruneau@math.u-bordeaux.fr}
\author[G. Raikov]{Georgi Raikov}
\address{Departamento de Matem\'aticas, Facultad de Matem\'aticas, Pontificia Universidad
Cat\'olica de Chile, Vicu\~na Mackenna 4860, Santiago de Chile}
\email{graikov@mat.uc.cl}

\begin{abstract}
We consider harmonic Toeplitz operators $T_V = PV : \harmo \to \harmo$ where $P : L^2(\Omega) \to \harmo$ is the orthogonal projection onto
$\harmo = \left\{u \in L^2(\Omega) \, | \, \Delta u = 0 \; \mbox{in} \; \Omega \right\}$, $\Omega \subset \re^d$, $d \geq 2$, is a bounded domain with boundary $\partial \Omega \in C^\infty$, and $V : \Omega \to \C$ is an appropriate multiplier. First, we complement the known criteria which guarantee that $T_V$ is in the $p$th Schatten-von Neumann class $S_p$,  by simple sufficient conditions which imply $T_V \in S_{p, {\rm w}}$, the weak counterpart of $S_p$. Next, we consider symbols $V \geq 0$ which have a regular power-like decay of rate $\gamma > 0$ at $\partial \Omega$, and we show that $T_V$ is unitarily equivalent to a pseudo-differential operator of order $-\gamma$, self-adjoint in $L^2(\partial \Omega)$. Utilizing this unitary equivalence, we obtain the main asymptotic term of the eigenvalue counting function for  $T_V$, and establish a sharp remainder estimate. Further, we assume that $\Omega$ is the unit ball in $\re^d$, and $V = \overline{V}$ is compactly supported in $\Omega$, and investigate the eigenvalue asymptotics of the Toeplitz operator $T_V$.  Finally, we introduce the Krein Laplacian $K$, self-adjoint in $L^2(\Omega)$, perturb it by a multiplier $V \in C(\overline{\Omega};\re)$, and show that $\sigma_{\rm ess}(K+V) = V(\partial \Omega)$. Assuming that $V \geq 0$ and $V_{|\partial \Omega} = 0$, we study the asymptotic distribution of the discrete spectrum of $K \pm V$ near the origin, and find that the effective Hamiltonian which governs this distribution is the Toeplitz operator $T_V$.
\end{abstract}

\maketitle

{\bf Keywords}:  Harmonic Toeplitz operators; Krein Laplacian;  eigenvalue asymptotics;\\ effective Hamiltonian\\

{\bf  2010 AMS Mathematics Subject Classification}:  47B35, 35J25, 35P15, 35P20\\

%%%%%%%%%%%%%%%%%%%%%%%%%%%%%%%%%%%%%%%%%
%%%%%%%%%%%%%%%%%%%%%%%%%%%%%%%%%%%%%%%%%
\section{Introduction}
\label{s1}
%%%%%%%%%%%%%%%%%%%%%%%%%%%%%%%%%%%%%%%%%
%%%%%%%%%%%%%%%%%%%%%%%%%%%%%%%%%%%%%%%%%
Let $\Omega \subset \re^d$, $d \geq 2$, be a bounded domain, i.e. a bounded open, connected, non-empty set. Suppose that $\partial \Omega \in C^\infty$.
Let ${\mathcal H}(\Omega)$ be the subspace
of $L^2(\Omega)$ consisting of functions harmonic in $\Omega$, i.e.
    \bel{1}
{\mathcal H}(\Omega) : = \left\{u \in L^2(\Omega) \, | \,  \Delta u = 0 \; \mbox{in} \; \Omega \right\}.
    \ee
It is well known that ${\mathcal H}(\Omega)$ is a closed subspace of  $L^2(\Omega)$ (see e.g. \cite{kako}). Let $P: L^2(\Omega) \to L^2(\Omega)$ be the orthogonal projection onto ${\mathcal H}(\Omega)$.
Assume that $V : \Omega \to {\mathbb C}$ is locally integrable in $\Omega$, and satisfies certain regularity conditions near $\partial \Omega$.
Then it can happen that the operator $T_V : = PV : \harmo \to \harmo$ called {\em harmonic Toeplitz operator with symbol $V$}, is bounded or even compact.
The article is devoted mostly to the study of the spectral properties of compact $T_V$.\\
First, in Section \ref{ss21} we recall some known criteria for the boundedness of $T_V$, its compactness,  and its membership to the Schatten-von Neumann classes $S_p$. Moreover, in Section \ref{ss21} we establish simple sufficient conditions which guarantee $T_V \in S_{p, {\rm w}}$, the weak Schatten-von Neumann class.\\ In Section \ref{ss23}, we assume that $V$ has a  power-like decay at $\partial \Omega$, and  establish in Proposition \ref{p9} a unitary equivalence between $T_V$ and  a certain pseudo-differential operator  acting in $L^2(\partial \Omega)$. We apply these results  in order to investigate in Theorem \ref{t1} the asymptotic distribution of the discrete spectrum of $T_V$.\\
Further, in Section \ref{symmetry} we consider the special case where $\Omega$ is the unit ball in $\re^d$.  If $V$ is radially symmetric, then the  eigenvalues and the eigenfunctions of $T_V$ could be written explicitly. Using these explicit calculations, we obtain the main asymptotic term of the eigenvalue counting function for $T_V$ for compactly supported $V$ with radially symmetric ${\rm supp}\,V$ (see Proposition \ref{p6}).\\
Finally, in Section \ref{s3} we introduce the Krein Laplacian $K$, self-adjoint in $L^2(\Omega)$. We have $K \geq 0$,  ${\rm Ker}\,K = {\mathcal H}(\Omega)$, and the zero eigenvalue of $K$ is isolated  (see \cite{k1, g1, as}). We perturb $K$ by the real-valued multiplier $V \in C(\overline{\Omega})$ and show that $\sigma_{\rm ess}(K+V) = V(\partial \Omega)$.  If $V \geq 0$ and $V_{| \partial \Omega} = 0$, we show that, generically, there exists a sequence of negative (resp., positive) discrete eigenvalues of the operator $K - V$ (resp., $K + V$), which accumulate to the origin from below (resp., from above). We show that the effective Hamiltonian governing the asymptotics of these sequences is the harmonic Toeplitz operator $T_V$ (see Theorem \ref{t3}). Using the results of the previous sections we obtain results on the eigenvalue asymptotics for the operators $K \pm V$ (see Corollaries \ref{f3} and \ref{f2}).

%%%%%%%%%%%%%%%%%%%%%%%%%%%%%%%%%%%%%%%%%%
%%%%%%%%%%%%%%%%%%%%%%%%%%%%%%%%%%%%%%%%%%
\section{Compactness and membership to Schatten-von Neumann Classes of harmonic Toeplitz operators $T_V$}
\label{ss21}
%%%%%%%%%%%%%%%%%%%%%%%%%%%%%%%%%%%%%%%%%%
%%%%%%%%%%%%%%%%%%%%%%%%%%%%%%%%%%%%%%%%%%
In this section we recall some known criteria for the boundedness, compactness and membership to the Schatten-von Neumann classes $S_p$, $p \in [1,\infty)$, of the
harmonic Toeplitz operator $T_V$, which we borrow mainly from \cite{cln}. Moreover, we establish simple sufficient conditions which guarantee $T_V \in S_{p,{\rm w}}$, $p \in (1,\infty)$, where $S_{p,{\rm w}}$ is the $p$th weak Schatten-von Neumann class.
\subsection{Notations}
First, we introduce the notations we need. Let $X$ and $Y$ be separable Hilbert spaces. We denote by ${\mathcal L}(X,Y)$ (resp., $S_\infty(X,Y)$) the class of linear bounded (resp., compact) operators $T: X \to Y$. Let $T \in S_\infty(X,Y)$. Then $\left\{s_j(T)\right\}_{j=1}^{{\rm rank}\,T}$ is the set of the non-zero singular values of $T$, enumerated in non-increasing order. Next, $S_p(X,Y)$, $p \in (0,\infty)$, is $p$th Schatten-von Neumann class, i.e. the class of compact operators $T: X \to Y$ for which the functional
    $$
    \|T\|_p : = \left(\sum_{j=1}^{{\rm rank}\, T} s_j(T)^p\right)^{1/p}
     $$
     is finite. Similarly, $S_{p,{\rm w}}(X,Y)$, $p \in (0,\infty)$, is the $p$th weak Schatten-von Neumann class, i.e. the class of operators $T \in S_\infty(X,Y)$ for which the functional
    $$
    \|T\|_{p, {\rm w}} : = \sup_{j \geq 1} j^{1/p} s_j(T)
     $$
     is finite. If $X=Y$, we write ${\mathcal L}(X)$, $S_p(X)$,
     %$(0,\infty]$,
     and $S_{p,{\rm w}}(X)$, instead of ${\mathcal L}(X,X)$, $S_p(X,X)$, and $S_{p,{\rm w}}(X,X)$, respectively. Moreover, whenever appropriate, we omit $X$ and $Y$ in the notations ${\mathcal L}$, $S_p$, and $S_{p,{\rm w}}$.\\ If $p \geq 1$, then $\|\cdot\|_p$ is a norm, and $S_p$ is a Banach space. If $p > 1$, then there exists a norm in $S_{p,{\rm w}}$ which is equivalent to the functional $\|\cdot\|_{p, {\rm w}}$, and $S_{p,{\rm w}}$, equipped with this norm, is again a Banach space. Moreover, evidently, if $0 < p_1 \leq p_2 < p_3$, then $S_{p_1} \subset S_{p_2, {\rm w}} \subset S_{p_3}$, and all the inclusions are strict.\\
     For further references, we introduce here the eigenvalue counting functions for compact operators. Let $T=T^* \in S_\infty$. For $s>0$ set
    \bel{22}
    n_\pm(s; T) : = {\rm Tr}\,\one_{(s,\infty)}(\pm T).
    \ee
    Here and in the sequel $\one_S$ denotes the characteristic function of the set $S$; thus $\one_{\mathcal I}(T)$ is the spectral projection of $T$ corresponding to the interval ${\mathcal I} \subset \re$, and $n_+(s;T)$ (resp., $n_-(s;T)$) is just the number of the eigenvalues of the operator $T$ larger than $s$ (resp., smaller than $-s$), counted with their multiplicities.
    %In other words, if $\left\{\varkappa^+_j(T)\right\}_{j=1}^{{\rm rank}\,T_{+}}$ (resp., $\left\{\varkappa^-_j(T)\right\}_{j=1}^{{\rm rank}\,T_{-}}$) is the %set of the positive (resp., negative) eigenvalues of $T$, enumerated in non-increasing (resp., non-decreasing) order, then
    %$$
    %n_{\pm}(s; T) = \#\left\{j \in \N \, | \, \pm \varkappa_j^{\pm}(T) > s\right\}, \quad s>0.
    %$$
    If $T_j = T_j^* \in S_\infty(X)$, $j=1,2$,  then the Weyl inequalities
    \bel{23}
    n_{\pm}(s_1+s_2; T_1+T_2) \leq n_\pm(s_1; T_1) + n_\pm(s_2; T_2)
    \ee
    hold for $s_j > 0$, $j=1,2$, (see e.g. \cite[Theorem 9, Section 9.2]{bs2}). \\
    Let $T \in S_\infty(X,Y)$. For $s>0$ set
    \bel{24}
    n_*(s; T) : = n_+(s^2; T^*T).
    \ee
    Thus, $n_*(s;T)$ is the number of the singular values of the operator $T$, larger than $s$, and counted with their multiplicities.
 \subsection{Some known results}
Let us now turn to the study of the spectral properties of the harmonic Toeplitz operators $T_V = PV$. Assume at first that $V \in C(\overline{\Omega})$; then, evidently, $T_V$ is bounded.
    Our first proposition deals with the location of $\sigma_{\rm ess}(\ttv)$, and contains a criterion for the compactness of  $\ttv$.

    \begin{pr} \label{p1}
     Let $\Omega \subset \re^d$, $d \geq 2$, be a bounded domain with boundary $\partial \Omega \in C^\infty$. Let $V \in C(\overline{\Omega})$.\\
    {\rm (i)} {\rm \cite[Theorem 4.5]{cln}} We have $\sigma_{\rm ess}(\ttv) = V(\partial \Omega)$.\\
    {\rm (ii)} {\rm \cite[Corollary 4.7]{cln}} The operator $\ttv$ is compact in $\harmo$ if and only if $V = 0$ on $\partial \Omega$.
    \end{pr}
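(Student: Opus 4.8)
The plan is to reduce both statements to a single Fredholm criterion. Since $P$ is the identity on $\harmo$, one has $\ttv - \lambda = T_{V-\lambda}$ for every $\lambda \in \C$, so (i) is equivalent to the assertion that $T_{V-\lambda}$ fails to be a Fredholm operator on $\harmo$ precisely when $0 \in (V-\lambda)(\partial\Omega)$; and the ``only if'' half of (ii) is then immediate, since if $V$ does not vanish identically on $\partial\Omega$ some nonzero value $V(x_0)$, $x_0\in\partial\Omega$, lies in $\sigma_{\rm ess}(\ttv)$, forcing $\ttv$ to be non-compact. The first ingredient I would establish is the local estimate that $T_W$ is compact whenever $W\in C(\overline{\Omega})$ has ${\rm supp}\,W$ compactly contained in $\Omega$: interior Cauchy estimates for harmonic functions make the restriction map $\harmo\ni u\mapsto u|_K\in L^2(K)$ compact for every compact $K\subset\Omega$ (a bounded subset of $\harmo$ is bounded in $C^\infty$ on compact subsets, hence precompact in $L^2(K)$ by Rellich), and $T_W$ factors through this map. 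Then, for $W\in C(\overline{\Omega})$ with $W|_{\partial\Omega}=0$, approximating $W$ uniformly on $\overline{\Omega}$ by the cut-offs $\chi_\eps W$ supported away from $\partial\Omega$ and using $\|T_W-T_{\chi_\eps W}\|\le\|W-\chi_\eps W\|_\infty\to 0$ shows $T_W$ is compact as a norm limit of compact operators; this is the ``if'' part of (ii).

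To get $V(\partial\Omega)\subseteq\sigma_{\rm ess}(\ttv)$ I would construct, for $x_0\in\partial\Omega$ and $\lambda:=V(x_0)$, a singular Weyl sequence for $T_{V-\lambda}$ out of harmonic functions concentrating at $x_0$ --- for instance the normalized reproducing kernels $u_n:=R(\cdot,y_n)/R(y_n,y_n)^{1/2}$ with $y_n\to x_0$ nontangentially, $R$ the harmonic Bergman kernel of $\Omega$. One checks $\|u_n\|=1$; that $\|u_n\|_{L^2(\Omega\setminus B(x_0,\delta))}\to 0$ for each $\delta>0$, using $R(y_n,y_n)\to\infty$ and the standard off-diagonal bounds on $R$; and that $u_n\rightharpoonup 0$, tested against the dense family $\{R(\cdot,z):z\in\Omega\}$ and using the same kernel bounds. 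Since $\|T_{V-\lambda}u_n\|\le\|(V-\lambda)u_n\|_{L^2(\Omega)}$, and the latter is bounded by $\sup_{B(x_0,\delta)\cap\overline{\Omega}}|V-V(x_0)|$ plus a term that vanishes as $n\to\infty$, letting $n\to\infty$ and then $\delta\to 0$ gives $\|T_{V-\lambda}u_n\|\to 0$, so $\lambda\in\sigma_{\rm ess}(\ttv)$.

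For the reverse inclusion $\sigma_{\rm ess}(\ttv)\subseteq V(\partial\Omega)$ I would exhibit a parametrix. Given $\lambda\notin V(\partial\Omega)$, set $W:=V-\lambda$; then $|W|\ge 2c>0$ on $\partial\Omega$, hence $|W|\ge c$ on $\Omega_\delta:=\{{\rm dist}(\cdot,\partial\Omega)<\delta\}$ for some $\delta>0$. Picking $\chi\in C^\infty(\overline{\Omega})$ with $\chi\equiv 1$ near $\partial\Omega$ and ${\rm supp}\,\chi\subset\Omega_\delta$, the function $\phi:=\chi/W$ (extended by $0$) lies in $C(\overline{\Omega})$ and satisfies $\phi W=W\phi=\chi$. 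Since $(1-\chi)|_{\partial\Omega}=0$, the operator $T_{1-\chi}$ is compact by the first paragraph, so $T_\chi=I-T_{1-\chi}$ is a compact perturbation of the identity on $\harmo$ (as $T_1=I$). Combining this with the semicommutator identity $T_aT_b-T_{ab}=-PM_aP^\perp M_bP$ and the compactness of $T_aT_b-T_{ab}$ for all $a,b\in C(\overline{\Omega})$, one obtains $T_\phi T_W=T_{\phi W}+(\text{compact})=T_\chi+(\text{compact})=I+(\text{compact})$ and, symmetrically, $T_WT_\phi=I+(\text{compact})$; hence $T_W=\ttv-\lambda$ is Fredholm and $\lambda\notin\sigma_{\rm ess}(\ttv)$. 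Together with the previous paragraph this proves (i), and (ii) follows as noted.

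The hard part is the compactness of the semicommutator $T_aT_b-T_{ab}$ for $a,b\in C(\overline{\Omega})$. The interior argument above only covers the case where one of the symbols is compactly supported in $\Omega$, and symbols that do not vanish on $\partial\Omega$ cannot be reached by uniform approximation; moreover the individual Hankel-type factors $PM_aP^\perp$ and $P^\perp M_bP$ are \emph{not} compact, so one must use that the semicommutator is their \emph{product}, which is what produces the extra decay at $\partial\Omega$. I expect this to require size and off-diagonal estimates for the harmonic Bergman kernel $R$ on a smooth bounded domain --- realizing $T_aT_b-T_{ab}$ as an integral operator whose kernel carries the vanishing factor $a(x)-a(y)$ against the singularity of $R$ --- as carried out in \cite{cln}; alternatively, one could organize the argument through a Simonenko-type local principle (each boundary point contributes the operator $V(x_0)I$, locally invertible iff $V(x_0)\neq 0$), or invoke the reduction of $\ttv$ to a zero-order pseudodifferential operator on $L^2(\partial\Omega)$ in the spirit of Proposition \ref{p9} and read off $\sigma_{\rm ess}$ from its principal symbol.
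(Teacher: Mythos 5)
The paper itself gives no proof of this proposition: both parts are quoted directly from Choe--Lee--Na \cite{cln}, so the comparison is between your reconstruction and the argument in that reference. Your architecture is the standard one, and every step you actually carry out is sound: compactness of $T_W$ for $W$ supported in a compact subset of $\Omega$ (interior estimates plus Arzel\`a--Ascoli), the uniform approximation giving the ``if'' half of (ii), the singular Weyl sequence built from the normalized reproducing kernels $\varrho(y_n)^{-1/2}{\mathcal R}(\cdot,y_n)$ (which rests precisely on the bounds \eqref{a} and \eqref{b} of Lemma \ref{l1}), and the parametrix scheme for $\sigma_{\rm ess}(\ttv)\subseteq V(\partial\Omega)$, including the correct observation that $T_1=I$ on $\harmo$ and hence $T_\chi=I-T_{1-\chi}$ is a compact perturbation of the identity.

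The genuine gap is the one you flag yourself: the compactness of the semicommutator $T_aT_b-T_{ab}=-\,Pa(I-P)bP\vert_{\harmo}$ for arbitrary $a,b\in C(\overline{\Omega})$ is exactly the analytic content of \cite[Theorem 4.5]{cln}, so deferring it back to \cite{cln} leaves the proposal as a reduction rather than a proof. The gap is, however, fillable with the tools already in the paper. Writing $H_b:=(I-P)bP$ and using the reproducing property, $H_b$ acts on $u\in\harmo$ through the integral kernel $\bigl(b(x)-b(y)\bigr){\mathcal R}(x,y)$. If $b$ is Lipschitz with constant $L$, then \eqref{a} gives the bound $L\,|x-y|\cdot C\,\delta(x,y)^{-d}\leq LC\,|x-y|^{1-d}$, which is uniformly integrable in $y$ over the bounded set $\Omega$; splitting the kernel into the regions $\{|x-y|>\eta\}$ (a bounded kernel on a bounded domain, hence Hilbert--Schmidt) and $\{|x-y|\leq\eta\}$ (Schur bound $O(L\eta)$) exhibits $H_b$ as a norm limit of Hilbert--Schmidt operators, hence compact. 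For general $b\in C(\overline{\Omega})$, approximate uniformly by Lipschitz functions and use $\|H_b-H_{b_n}\|\leq\|b-b_n\|_{L^\infty}$. This makes each Hankel factor compact and hence the semicommutator $-H_{\bar a}^{*}H_b$ compact, closing the argument. Note that this also shows your parenthetical claim that the individual factors $Pa(I-P)$ and $(I-P)bP$ are \emph{not} compact is wrong for continuous symbols on a bounded smooth domain -- their compactness is what makes the proof work, and one does not need to exploit any extra cancellation in the product. Finally, be aware that your suggested alternative via Proposition \ref{p9} does not apply directly to general $V\in C(\overline{\Omega})$, since that proposition requires the structural hypothesis \eqref{rez1}.
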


Further, it is well known that the projection $P$ onto ${\mathcal H}(\Omega)$ (see \eqref{1}) admits an integral kernel ${\mathcal R} \in C^\infty(\Omega \times \Omega)$, called {\em the reproducing kernel} of $P$ (see e.g. \cite{kako, cln}). Thus
$$
(Pu)(x) = \int_\Omega {\mathcal R}(x,y) u(y) dy, \quad x \in \Omega, \quad u \in L^2(\Omega).
$$
Let $\left\{\varphi_j\right\}_{j \in \N}$ be an orthogonal basis in ${\mathcal H}(\Omega)$. Then we have
    \bel{2}
    {\mathcal R}(x,y) = \sum_{j \in \N} \varphi_j(x) \overline{\varphi_j(y)}, \quad x,y \in \Omega,
    \ee
    the series being locally uniformly convergent in $\Omega \times \Omega$. Evidently, ${\mathcal R}(x,y)$ is independent of the choice of the basis  $\left\{\varphi_j\right\}_{j \in \N}$. Moreover, the kernel ${\mathcal R}$ is real-valued and symmetric.
    For $x \in \Omega$ put
    $$
\varrho(x): = {\mathcal R}(x,x).
    $$
    Then, \eqref{2} implies that
    %\bel{3}
    $$
    |{\mathcal R}(x,y)| \leq \varrho(x)^{1/2}\,\varrho(y)^{1/2}, \quad x,y \in \Omega.
    $$
   % \ee
For $x, y \in \Omega$, set
    \bel{sep1}
r(x) : = {\rm dist}(x,\partial \Omega), \quad \delta(x,y) : = |x-y| + r(x) + r(y).
    \ee
    \begin{lemma} \label{l1}
    {\rm \cite[Theorem 1.1]{kako}} For any multiindices $\alpha, \beta \in \Z_+^d$ there exists a constant $C_{\alpha, \beta} \in (0,\infty)$ such that
    \bel{a}
    \left|D_x^\alpha D_y^\beta {\mathcal R}(x,y)\right| \leq \frac{C_{\alpha, \beta}}{\delta(x,y)^{d+|\alpha| + |\beta|}}, \quad x,y \in \Omega.
    \ee
    Moreover, there exists a constant $C \in (0,\infty)$ such that
    \bel{b}
    \varrho(x) \geq C r(x)^{-d}, \quad x \in \Omega.
    \ee
    \end{lemma}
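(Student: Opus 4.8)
The plan is to settle the elementary bound \eqref{b} by domain monotonicity of the kernel diagonal together with a model computation, and to prove the derivative estimate \eqref{a} by using that $\mathcal R$ is harmonic in each variable, reducing it by scaling to a local statement, and matching it against an explicit model kernel on a half-space; the real difficulty is the behaviour of $\mathcal R$ near $\partial\Omega$. For \eqref{b}: by the reproducing identity and Cauchy--Schwarz, $\varrho(x)=\mathcal R(x,x)=\sup\{|u(x)|^{2}\colon u\in\harmo,\ \|u\|_{L^{2}(\Omega)}\le1\}$, and this extremal quantity decreases when the domain grows (a competitor for the larger domain restricts to one for the smaller). Comparison with the interior ball $B(x,r(x))\subset\Omega$, whose kernel diagonal equals $\omega_{d}^{-1}r(x)^{-d}$ ($\omega_{d}=|B(0,1)|$) by the mean value property, gives the sharp upper bound $\varrho(x)\le\omega_{d}^{-1}r(x)^{-d}$, recorded for later use. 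For the lower bound, since $\partial\Omega\in C^{\infty}$ there is $\rho_{0}>0$ such that at the boundary point $\xi_{0}$ nearest to $x$ there is an exterior ball $B_{\rm ex}$ of radius $\rho_{0}$ with $\overline\Omega\cap\overline{B_{\rm ex}}=\{\xi_{0}\}$; then $\Omega\subset\re^{d}\setminus\overline{B_{\rm ex}}$ and ${\rm dist}(x,\partial B_{\rm ex})=r(x)$ (the relevant points are collinear), so by monotonicity $\varrho(x)\ge\varrho_{\re^{d}\setminus\overline{B_{\rm ex}}}(x)$, and a spherical-harmonics expansion of the harmonic Bergman kernel of the exterior of a ball shows the right-hand side is $\gtrsim r(x)^{-d}$, which is \eqref{b}.

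For \eqref{a}: by \eqref{2} and the local uniform convergence there, $\mathcal R(\cdot,y)$ is harmonic for each $y$, hence, by symmetry of $\mathcal R$, so is $\mathcal R(x,\cdot)$ for each $x$; as a derivative in one slot commutes with the Laplacian in the other, $z\mapsto D_{x}^{\alpha}\mathcal R(x,z)$ and $z\mapsto D_{y}^{\beta}\mathcal R(z,y)$ are harmonic on $\Omega$ as well. Since $\delta$ and the right-hand side of \eqref{a} scale consistently, and $\delta(x',y)$ is comparable to $\delta(x,y)$ for $x'\in B(x,r(x)/2)$ (using $|r(x)-r(y)|\le|x-y|$), it suffices to prove \eqref{a} locally on such balls and on a fixed interior compact, treating all $(\alpha,\beta)$ at once --- there is no reduction to $\alpha=\beta=0$, since the interior estimates for harmonic functions lose one power of $r(x)$ per derivative, which is weaker than the claimed gain of a power of $\delta$. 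Two regimes are then immediate. If $x,y$ both lie in a fixed compact set $\{r\ge\eps_{0}\}\subset\Omega$, then $\delta(x,y)$ is bounded above and below and $\mathcal R\in C^{\infty}(\{r\ge\eps_{0}\}\times\{r\ge\eps_{0}\})$, so \eqref{a} is trivial. If $|x-y|\le\tfrac14(r(x)+r(y))$ --- which forces $r(x)$, $r(y)$, $\delta(x,y)$ mutually comparable --- then, writing $\mathcal R(x,y)=\langle\mathcal R(\cdot,y),\mathcal R(\cdot,x)\rangle_{L^{2}(\Omega)}$, differentiating under the inner product, applying Cauchy--Schwarz, and using $\|D_{x}^{\alpha}\mathcal R(\cdot,x)\|_{L^{2}(\Omega)}=\sup\{|D_{x}^{\alpha}u(x)|\colon u\in\harmo,\ \|u\|\le1\}\le C r(x)^{-|\alpha|-d/2}$ (an interior estimate for harmonic functions), one bounds $|D_{x}^{\alpha}D_{y}^{\beta}\mathcal R(x,y)|$ by $C r(x)^{-|\alpha|-d/2}r(y)^{-|\beta|-d/2}\le C\delta(x,y)^{-d-|\alpha|-|\beta|}$.

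The remaining --- and hard --- case is that $x$ (or $y$) is close to $\partial\Omega$ while $|x-y|\gtrsim\max(r(x),r(y))$, where honest decay is needed and the crude bound $|\mathcal R(x,y)|\le\varrho(x)^{1/2}\varrho(y)^{1/2}$ is worthless. Here I would localize: cover $\partial\Omega$ by finitely many charts in which a $C^{\infty}$ diffeomorphism flattens it onto a piece of $\re^{d-1}\times\{0\}$, transplant the problem into a half-space-like region, and compare $\mathcal R_{\Omega}$ with the harmonic Bergman kernel $\mathcal R_{+}$ of the half-space $\re^{d}_{+}$, which is explicit --- homogeneous of degree $-d$ in $(|x'-y'|,x_{d},y_{d})$ and smooth away from $\{|x'-y'|=x_{d}+y_{d}=0\}$ --- so that $|D_{x}^{\alpha}D_{y}^{\beta}\mathcal R_{+}(x,y)|\le C_{\alpha\beta}(|x'-y'|+x_{d}+y_{d})^{-d-|\alpha|-|\beta|}$, comparable to $\delta(x,y)^{-d-|\alpha|-|\beta|}$ there. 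The principal obstacle is exactly this comparison: flattening the boundary turns $\Delta$ into a variable-coefficient elliptic operator, so $\mathcal R_{\Omega}$ is not literally $\mathcal R_{+}$ even locally, and one must control the difference of the associated projections --- perturbatively, by a Neumann-type expansion about the constant-coefficient model, or by peeling off an error kernel that obeys a strictly better estimate and iterating. An alternative that merely relocates the difficulty is to use the identity $\mathcal R(x,y)=-\Delta_{x}\Delta_{y}\gamma(x,y)$, where $\gamma$ is the regular part of the biharmonic Dirichlet Green function of $\Omega$ (this follows from $\harmo^{\perp}=\Delta(H^{2}_{0}(\Omega))$ and integration by parts, the Dirichlet data of $\gamma$ annihilating the boundary terms), together with sharp Krasovskii-type bounds $|D_{x}^{\alpha}D_{y}^{\beta}\gamma(x,y)|\lesssim\delta(x,y)^{4-d-|\alpha|-|\beta|}$ for smooth domains; but those are proved by the same boundary localization.
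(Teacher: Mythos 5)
The paper does not prove this lemma: it is quoted verbatim from Kang and Koo \cite[Theorem 1.1]{kako}, so your attempt has to stand on its own rather than be matched against an in-text argument.

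What you do prove is the easy part. The variational identity $\varrho(x)=\sup\{|u(x)|^{2}:u\in\harmo,\ \|u\|\leq 1\}$, the domain monotonicity it implies, and the Cauchy--Schwarz bound $|D_{x}^{\alpha}D_{y}^{\beta}{\mathcal R}(x,y)|\leq C\,r(x)^{-|\alpha|-d/2}\,r(y)^{-|\beta|-d/2}$ via interior derivative estimates for harmonic functions are all correct, and they do settle \eqref{a} on interior compacts and in the regime $|x-y|\lesssim r(x)+r(y)$, where $r(x)\asymp r(y)\asymp\delta(x,y)$. For \eqref{b}, however, your exterior-ball comparison defers to an uncomputed model: the lower bound for the kernel diagonal of the \emph{exterior} of a ball (an unbounded domain) near its boundary sphere is a statement of exactly the same type as \eqref{b}, so asserting it via ``a spherical-harmonics expansion'' is close to circular. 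A self-contained substitute in the same spirit: take $z=\xi_{0}+r(x)\nu(\xi_{0})$ outside $\overline{\Omega}$ and test with $u=D^{\alpha}E(\cdot-z)$, $E$ the fundamental solution and $|\alpha|$ large enough that $2(d-2+|\alpha|)>d$; then $|u(x)|^{2}\gtrsim r(x)^{-2(d-2+|\alpha|)}$ while $\|u\|_{L^{2}(\Omega)}^{2}\lesssim r(x)^{d-2(d-2+|\alpha|)}$, giving $\varrho(x)\gtrsim r(x)^{-d}$ directly (the case $r(x)$ bounded below being trivial via constants).

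The genuine gap is the regime that carries all of the content of \eqref{a}: $\min(r(x),r(y))$ small and $|x-y|\gtrsim r(x)+r(y)$. There your bounds give nothing -- already for $\alpha=\beta=0$, with $y$ fixed in the interior and $r(x)\to 0$, the Cauchy--Schwarz estimate degenerates like $r(x)^{-d/2}$ whereas \eqref{a} asserts $|{\mathcal R}(x,y)|=O(|x-y|^{-d})=O(1)$ -- and what you offer instead is a plan (flatten the boundary, compare with the explicit half-space kernel, control the difference of the two projections ``perturbatively \dots or by peeling off an error kernel and iterating'') whose decisive step you yourself flag as ``the principal obstacle'' and do not execute. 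The alternative identity ${\mathcal R}(x,y)=-\Delta_{x}\Delta_{y}\gamma(x,y)$ is likewise only named, and the sharp bounds on the regular part of the biharmonic Green function in terms of $\delta(x,y)$ rather than $|x-y|$ are of the same depth as the lemma itself, as you concede. So the proposal is an honest and correctly organized reduction of the lemma to its hard boundary estimate, not a proof of it; for the purposes of this paper the right move is the one the authors make, namely to cite \cite[Theorem 1.1]{kako}.
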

    For a Borel set ${\mathcal A} \subset \Omega$ set $\rho({\mathcal A}) := \int_{\mathcal A} \varrho(x)dx$. By \eqref{a} with $\alpha = \beta = 0$, and  \eqref{b}, $\rho$ is an infinite $\sigma$-finite measure on $\Omega$ which is absolutely continuous with respect to the Lebesgue measure.\\

    The following proposition contains criteria for the boundedness, compactness and membership to $S_p$, $p \in [1,\infty)$, of $T_V$ in the case where $0 \leq V \in L^1(\Omega)$.
    In fact, following \cite{cln}, we will formulate these results in a more general setting, considering harmonic Toeplitz operators $T_\mu$ associated with  finite Borel measures $\mu \geq 0$ on $\Omega$. In this case, $T_\mu$ is defined by
    $$
    (T_\mu u)(x) : = \int_\Omega {\mathcal R}(x,y) u(y) d\mu(y), \quad u \in \harmo, \quad x \in \Omega.
    $$
    If $d\mu(x) = V(x)dx$ with $0 \leq V \in L^1(\Omega)$, then, of course, $T_\mu = T_V$. Define {\rm the Berezin transform} $\tilde{\mu}$ of the measure $\mu$ by
    \bel{s2}
    \tilde{\mu}(x) : = \varrho(x)^{-1} \int_\Omega {\mathcal R}(x,y)^2 d\mu(y), \quad x \in \Omega.
    \ee
    In what follows we write $A \asymp B$ if there exist constants $0 < c_1 \leq c_2 < \infty$ such that $c_1 A \leq B  \leq c_2 A$. \\

    \begin{pr} \label{p2} Let $\Omega \subset \re^d$, $d \geq 2$, be a bounded domain with boundary $\partial \Omega \in C^\infty$.
    Let $\mu \geq 0$ be a finite Borel measure on $\Omega$, and let $\tilde{\mu}$ be its Berezin transform. \\
    {\rm (i) \cite[Theorem 3.5, Theorem 3.9]{cln}} We have $T_\mu \in {\mathcal L}(\harmo )$ if and only if $\tilde{\mu}$ is bounded on $\Omega$.
    Moreover,
    \bel{4}
    \|T_\mu\| \asymp \sup_{x \in \Omega}\tilde{\mu}(x).
    \ee
     {\rm (ii) \cite[Theorem 3.11, Theorem 3.12]{cln}} We have $T_\mu \in S_\infty(\harmo)$ if and only if $$\lim_{x \to \partial \Omega}\tilde{\mu}(x) = 0.$$\\
     {\rm (iii) \cite[Theorem 3.13]{cln}} Let $p \in [1,\infty)$. We have $T_\mu \in S_p(\harmo)$ if and only if $\tilde{\mu} \in L^p(\Omega;d\rho)$.
    Moreover,
    \bel{5}
    \|T_\mu\|_p \asymp \|\tilde{\mu}\|_{L^p(\Omega;d\rho)}.
    \ee
    \end{pr}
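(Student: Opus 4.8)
The plan is to realise $T_\mu=J_\mu^{*}J_\mu$, where $J_\mu\colon\harmo\to L^2(\Omega;d\mu)$ is the embedding $u\mapsto u$; this is legitimate on $\harmo$ because the reproducing formula gives the pointwise bound $|u(x)|^2=|\langle u,\mathcal{R}(\cdot,x)\rangle|^2\le\varrho(x)\|u\|^2$. Since $s_j(T_\mu)=s_j(J_\mu)^2$, the operator $T_\mu$ is bounded, resp.\ compact, resp.\ in $S_p$, exactly when $J_\mu$ is bounded, resp.\ compact, resp.\ in $S_{2p}$. I would also record the identity $\tilde\mu(x)=\langle T_\mu\hat k_x,\hat k_x\rangle$ for the normalised reproducing kernel $\hat k_x:=\varrho(x)^{-1/2}\mathcal{R}(\cdot,x)$: expanding and using the reproducing property together with the reality and symmetry of $\mathcal{R}$ gives $\langle T_\mu\hat k_x,\hat k_x\rangle=\varrho(x)^{-1}\int_\Omega\mathcal{R}(x,y)^2\,d\mu(y)$, which is $\tilde\mu(x)$ by \eqref{s2}. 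This already yields the necessity half of (i), $\sup_x\tilde\mu(x)=\sup_x\langle T_\mu\hat k_x,\hat k_x\rangle\le\|T_\mu\|$; and the case $p=1$ of (iii) becomes an exact identity, since by Tonelli and $\varrho(y)=\int_\Omega\mathcal{R}(x,y)^2dx$ one has $\|T_\mu\|_1=\Tr T_\mu=\int_\Omega\varrho\,d\mu=\int_\Omega\big(\int_\Omega\mathcal{R}(x,y)^2dx\big)d\mu(y)=\int_\Omega\tilde\mu\,d\rho$.

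The engine of the proof is a Whitney-type discretisation adapted to the quasihyperbolic metric $r(x)^{-1}|dx|$: fix $\{a_k\}\subset\Omega$ and $r_k:=r(a_k)$ such that the balls $B_k:=B(a_k,r_k/4)$ are pairwise disjoint while $\{Q_k:=B(a_k,r_k/2)\}$ covers $\Omega$ with overlap bounded by $N=N(d)$. Lemma \ref{l1}, together with the continuity of $\mathcal{R}$ and a suitable choice of the Whitney constant, supplies the two facts that do the work. First, $\mathcal{R}(x,y)\asymp\varrho(x)\asymp\varrho(a_k)\asymp r_k^{-d}$ on $Q_k$; this makes $\tilde\mu$ slowly varying ($\tilde\mu\asymp\tilde\mu(a_k)$ on $Q_k$), gives $\tilde\mu(a_k)\gtrsim\varrho(a_k)\mu(Q_k)$, and, since $\rho(Q_k)=\int_{Q_k}\varrho\asymp1$, yields $\|\tilde\mu\|_{L^p(\Omega;d\rho)}^p\asymp\sum_k\tilde\mu(a_k)^p$. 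Second, the off-diagonal bound $|\mathcal{R}(x,y)|\le C\delta(x,y)^{-d}$ gives, after summing over the covering, $\tilde\mu(a_k)\lesssim\sum_j w_{jk}\,\varrho(a_j)\mu(Q_j)$ with $w_{jk}=(r_jr_k/\delta(a_j,a_k)^2)^d$, a matrix decaying like $e^{-d\,\kappa(a_j,a_k)}$ in the quasihyperbolic distance $\kappa$; since $\kappa$-balls have volume growth $\lesssim e^{(d-1)R}$ and $\{a_k\}$ is $\kappa$-separated, $(w_{jk})$ has uniformly bounded row and column sums, hence is bounded on every $\ell^p$. Combining, $\sum_k\tilde\mu(a_k)^p\asymp\sum_k(\varrho(a_k)\mu(Q_k))^p$ for all $p\in[1,\infty)$, the analogue for $p=\infty$, and the matching of the vanishing-at-$\partial\Omega$ conditions. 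Thus the proposition reduces to the discrete equivalences $T_\mu\in\mathcal{L}(\harmo)\iff\{\varrho(a_k)\mu(Q_k)\}\in\ell^\infty$; $T_\mu\in S_\infty(\harmo)\iff\varrho(a_k)\mu(Q_k)\to0$; $T_\mu\in S_p(\harmo)\iff\{\varrho(a_k)\mu(Q_k)\}\in\ell^p$.

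For (i) the sufficiency comes from the sub-mean-value estimate $\sup_{Q_k}|u|^2\lesssim\varrho(a_k)\int_{Q_k^{*}}|u|^2$ for $u\in\harmo$ ($Q_k^{*}$ a fixed dilate of $Q_k$, again a consequence of $|u(x)|^2\le\varrho(x)\|P(\mathbf{1}_{Q_k^{*}}u)\|^2$): summing over $k$, $\|J_\mu u\|_{L^2(\mu)}^2=\sum_k\int_{Q_k}|u|^2d\mu\lesssim N\,\big\|\{\varrho(a_k)\mu(Q_k)\}\big\|_\infty\|u\|^2$; necessity is the reproducing-kernel test above. For (ii), $T_\mu$ is the operator-norm limit, as $\varepsilon\downarrow0$, of the truncations $T_{\mu_\varepsilon}$ with $\mu_\varepsilon:=\mu|_{\{r>\varepsilon\}}$: each $T_{\mu_\varepsilon}=J_{\mu_\varepsilon}^{*}J_{\mu_\varepsilon}$ is compact because a bounded subset of $\harmo$ is relatively compact in $C(\overline{\{r>\varepsilon\}})$ by interior estimates for harmonic functions, while $\|T_\mu-T_{\mu_\varepsilon}\|=\|T_{\mu|_{\{r\le\varepsilon\}}}\|\asymp\sup\{\varrho(a_k)\mu(Q_k):Q_k\cap\{r\le\varepsilon\}\ne\varnothing\}\to0$, since those $a_k$ satisfy $r(a_k)\le2\varepsilon$ and $\varrho(a_k)\mu(Q_k)\asymp\tilde\mu(a_k)\to0$ there (the hypothesis of (ii)). Conversely $a_k\to\partial\Omega$ as $k\to\infty$, so $\hat k_{a_k}\rightharpoonup0$ weakly (by \eqref{b} and the sub-mean-value bound), whence compactness of $T_\mu$ forces $\varrho(a_k)\mu(Q_k)\asymp\langle T_\mu\hat k_{a_k},\hat k_{a_k}\rangle\to0$.

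For (iii) I would split $T_\mu=\sum_k T_{\mu_k}$, $\mu_k:=\mu|_{Q_k}$, where $\|T_{\mu_k}\|_p\le\|T_{\mu_k}\|_1^{1/p}\|T_{\mu_k}\|^{1-1/p}\asymp\varrho(a_k)\mu(Q_k)$ for all $p\in[1,\infty]$ (both endpoint norms of $T_{\mu_k}$ being $\asymp\varrho(a_k)\mu(Q_k)$ by the $p=1$ identity and part (i)), colour the index set into finitely many families along which the $Q_k$ are pairwise disjoint, and on each family exploit the near-orthogonality of the blocks — the interaction of $T_{\mu_j}$ and $T_{\mu_k}$ being governed by $w_{jk}$ — to pass from $\sum_k\|T_{\mu_k}\|_p^p$ to $\|T_\mu\|_p^p$, concluding $\|T_\mu\|_p^p\lesssim\sum_k(\varrho(a_k)\mu(Q_k))^p$; for $p\in(1,\infty)$ one may instead interpolate the $S_1$-$S_\infty$ scale between the endpoint estimates $\|T_\mu\|_1=\|\tilde\mu\|_{L^1(d\rho)}$ and $\|T_\mu\|\lesssim\|\tilde\mu\|_{L^\infty}$, extended to signed measures via $|\mu|$. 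The matching lower bound is obtained by testing $T_\mu$ against random combinations $\sum_k\varepsilon_k c_k\hat k_{a_k}$ and applying Khinchine's inequality after a duality pairing with $\ell^{p'}$. The main obstacle is precisely this part (iii): converting the essentially scalar data $\{\varrho(a_k)\mu(Q_k)\}$ into a bona fide singular-value bound for an operator whose building blocks $T_{\mu_k}$ have overlapping ranges is exactly what the finite colouring and almost-orthogonality (or the interpolation set-up) together with the random-signs argument are designed to handle, whereas parts (i), (ii) and all the geometric reductions are routine once Lemma \ref{l1} is available.
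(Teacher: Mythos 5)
The paper offers no proof of Proposition \ref{p2} to compare yours with: the three statements are quoted directly from Choe--Lee--Na \cite{cln} (their Theorems 3.5, 3.9, 3.11, 3.12 and 3.13), and the authors use them as a black box. What you have written is essentially a reconstruction of the Luecking-type argument of that reference: the factorization $T_\mu=J_\mu^*J_\mu$, the identity $\tilde\mu(x)=\langle T_\mu \varrho(x)^{-1/2}\mathcal{R}(\cdot,x),\varrho(x)^{-1/2}\mathcal{R}(\cdot,x)\rangle$ giving the necessity in (i) and the exact trace identity at $p=1$, the Whitney decomposition with respect to $r(x)$ and the averaging quantities $\varrho(a_k)\mu(Q_k)$, the sub-mean-value estimate for the sufficiency in (i), truncation plus weak-null convergence of the normalized kernels for (ii), Schur-type control of the off-diagonal interaction $(w_{jk})$, interpolation for $1<p<\infty$, and a Khinchine/random-signs test for the necessity in (iii). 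That is the correct architecture and matches the cited source in spirit.

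Two soft spots deserve attention. First, the pointwise claim that $\tilde\mu\asymp\tilde\mu(a_k)$ on $Q_k$ is not justified: unlike the holomorphic Bergman kernel, the harmonic kernel $\mathcal{R}(a_k,\cdot)$ changes sign and can vanish away from the diagonal, so $\mathcal{R}(x,y)^2$ and $\mathcal{R}(a_k,y)^2$ are not uniformly comparable in $y$, and the Berezin transform itself need not be slowly varying. This does not sink the proof, because everything you need can be routed through the sequence $\varrho(a_k)\mu(Q_k)$: the pointwise lower bound $\tilde\mu(x)\gtrsim\varrho(x)\mu(B(x,\eps r(x)))$ uses only near-diagonal positivity (from \eqref{b} together with the gradient bound in \eqref{a}), and the upper bound on $\int_\Omega\tilde\mu^p\,d\rho$ comes from your matrix $(w_{jk})$ and its uniform $\ell^p$-boundedness; state the equivalence $\|\tilde\mu\|_{L^p(\Omega;d\rho)}^p\asymp\sum_k\bigl(\varrho(a_k)\mu(Q_k)\bigr)^p$ that way, not via pointwise comparability of $\tilde\mu$. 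Second, in (iii) the colouring/almost-orthogonality route does not by itself yield $\|T_\mu\|_p^p\lesssim\sum_k\|T_{\mu_k}\|_p^p$ for $p>1$ (there is no $p$-triangle inequality in that range); the argument that actually closes is the interpolation you list as an alternative --- the map $\{c_k\}\mapsto T_{\sum_k c_k\mu_k}$ is linear, bounded from $\ell^1$ to $S_1$ and from $\ell^\infty$ to ${\mathcal L}$, hence from $\ell^p$ to $S_p$ --- so that should be the main sufficiency argument, with the random-signs test reserved for necessity. Finally, in the necessity part of (ii) the relation between $\varrho(a_k)\mu(Q_k)$ and the Berezin pairing is ``$\lesssim$'', not ``$\asymp$'', and what must be shown is $\lim_{x\to\partial\Omega}\tilde\mu(x)=0$ for the continuous variable $x$, which your weak-null argument for the normalized kernels already gives directly.
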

  \subsection{Membership to weak Schatten-von Neumann classes}  Our next goal is to establish conditions which guarantee $T_V \in S_{p, {\rm w}}(\harmo)$, $p \in (1,\infty)$. As a by-product we obtain also simple-looking sufficient conditions which imply $T_V \in S_{p}(\harmo)$, $p \in [1,\infty)$. \\
    For $p \in (0,\infty)$ define $L_{\rm w}^p(\Omega;d\rho)$ as the class of $\rho$-measurable functions $u: \Omega \to \C$ for which the quasinorm
    $$
    \|u\|_{L^p_{\rm w}(\Omega;d\rho)} : = \sup_{t>0} t \rho\left(\left\{x \in \Omega \, | \, |u(x)| > t\right\}\right)^{1/p}
    $$
    is finite. If $p>1$, then there exists a norm in $L_{\rm w}^p(\Omega;d\rho)$ which is equivalent to the functional $\|\cdot\|_{L^p_{\rm w}(\Omega;d\rho)}$, and $L_{\rm w}^p(\Omega;d\rho)$, equipped with this norm, is a Banach space.
    \begin{pr} \label{p3}
    Let $\Omega \subset \re^d$, $d \geq 2$ be a bounded domain with boundary $\partial \Omega \in C^\infty$. \\
    {\rm (i)} Assume $V \in L^p(\Omega;d\rho)$, $p \in [1,\infty)$. Then $T_V \in S_p(\harmo)$ and
    \bel{6}
    \|T_V\|_p \leq \|V\|_{L^p(\Omega;d\rho)}.
    \ee
    {\rm (ii)} Assume $V \in L^p_{\rm w}(\Omega;d\rho)$, $p \in (1,\infty)$. Then $T_V \in S_{p, {\rm w}}(\harmo)$ and
    \bel{7}
    \|T_V\|_{p, {\rm w}} \leq \|V\|_{L^p_{\rm w}(\Omega;d\rho)}.
    \ee
    \end{pr}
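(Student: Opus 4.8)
The plan is to place $T_V$ inside a natural multiplication-operator model, after which both claims will reduce to a single distribution-function estimate. Define $\Phi:\harmo\to L^2(\Omega;d\rho)$ by $(\Phi u)(x):=\varrho(x)^{-1/2}u(x)$. Since $\varrho(x)={\mathcal R}(x,x)\in(0,\infty)$ for every $x\in\Omega$ by \eqref{b} and harmonic functions are continuous in $\Omega$, the function $\Phi u$ is $\rho$-measurable and
\[
\|\Phi u\|_{L^2(\Omega;d\rho)}^2=\int_\Omega\varrho(x)^{-1}|u(x)|^2\varrho(x)\,dx=\|u\|_{L^2(\Omega)}^2,
\]
so $\Phi$ is an isometry; in particular $\|\Phi\|=\|\Phi^*\|=1$ and $\Phi^*\Phi=I_{\harmo}$. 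For $u,w$ in the dense set of functions harmonic in a neighbourhood of $\overline\Omega$ one has the form identity $\langle\Phi^*M_V\Phi\,u,w\rangle=\int_\Omega Vu\overline w\,dx=\langle T_Vu,w\rangle$, so that $T_V=\Phi^*M_V\Phi$, the multiplication being by $V$ in $L^2(\Omega;d\rho)$. (Under the weak integrability hypotheses on $V$ one must first check that $T_V$ is indeed the bounded operator carrying this sesquilinear form; this can be done through the Berezin transform together with a density argument, and this bookkeeping is the one step that requires real care, everything else being soft.)

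The heart of the matter is the bound
\[
n_*(s;T_V)\le\rho\bigl(\{x\in\Omega:\ |V(x)|>s\}\bigr),\qquad s>0.
\]
Since $P$ is a contraction, for $u\in\harmo$ we have $\|T_Vu\|=\|P(Vu)\|\le\|Vu\|_{L^2(\Omega)}$, hence
\[
\langle T_V^*T_V\,u,u\rangle=\|T_Vu\|^2\le\int_\Omega|V(x)|^2|u(x)|^2\,dx=\int_\Omega|V|^2\,|\Phi u|^2\,d\rho .
\]
If $\mathcal E\subset\harmo$ is a finite-dimensional subspace with $\langle T_V^*T_V u,u\rangle>s^2\|u\|^2$ on $\mathcal E\setminus\{0\}$, then, $\Phi$ being isometric, the quadratic form of the multiplication operator $M_{|V|^2}$ on $L^2(\Omega;d\rho)$ exceeds $s^2\|\cdot\|^2$ on the subspace $\Phi(\mathcal E)$, which has the same dimension as $\mathcal E$; by the min--max principle $\dim\mathcal E\le n_+(s^2;M_{|V|^2})=\rho(\{|V|>s\})$. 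Taking the supremum over all such $\mathcal E$ yields the claimed inequality and, in particular, shows that $n_*(s;T_V)<\infty$ for every $s>0$ and tends to $0$ as $s\to\infty$, so that $T_V$ is compact and its singular numbers are well defined.

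From here (i) is the layer-cake computation
\[
\|T_V\|_p^p=p\int_0^\infty s^{p-1}n_*(s;T_V)\,ds\le p\int_0^\infty s^{p-1}\rho(\{|V|>s\})\,ds=\int_\Omega|V|^p\,d\rho=\|V\|_{L^p(\Omega;d\rho)}^p,
\]
which gives \eqref{6} and $T_V\in S_p$; and (ii) follows because $n_*(s;T_V)\le\rho(\{|V|>s\})\le\|V\|_{L^p_{\rm w}(\Omega;d\rho)}^p\,s^{-p}$ forces $s_j(T_V)\le\|V\|_{L^p_{\rm w}(\Omega;d\rho)}\,j^{-1/p}$ for all $j\ge1$, i.e.\ \eqref{7}.

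As indicated, the only genuinely delicate point is the identification $T_V=\Phi^*M_V\Phi$ and the attendant convergence bookkeeping for rough $V$; once $T_V$ has been realized as a compression of a multiplication operator, the remainder reduces to elementary properties of multiplication operators and of the counting functions \eqref{22}, \eqref{24}, with no appeal to Propositions~\ref{p1}--\ref{p2} needed. (An alternative, non-sharp route to (i) is to combine Proposition~\ref{p2}(iii) with a Schur test showing that the Berezin transform is a contraction on $L^p(\Omega;d\rho)$ — its kernel with respect to $d\rho$ has all row and column integrals equal to $1$ — but this does not produce the sharp constant $1$ in \eqref{6}.)
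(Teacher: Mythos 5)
Your reduction $T_V=\Phi^*M_V\Phi$ is correct (it is just the coherent-state/Berezin picture: $(\Phi u)(m)=\varrho(m)^{-1/2}u(m)=\langle u,\eeps_m\rangle$), but the heart of your argument fails. The step ``by the min--max principle $\dim\mathcal E\le n_+(s^2;M_{|V|^2})=\rho(\{|V|>s\})$'' is wrong: with the paper's definition \eqref{22}, $n_+(s^2;M_{|V|^2})={\rm Tr}\,\one_{(s^2,\infty)}(M_{|V|^2})$ is the rank of multiplication by $\one_{\{|V|>s\}}$ on $L^2(\Omega;d\rho)$, which equals $0$ if $\rho(\{|V|>s\})=0$ and $+\infty$ otherwise; it is never the number $\rho(\{|V|>s\})$. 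So your Glazman-type argument only yields the vacuous bound $\dim\mathcal E\le\infty$. Moreover the claimed inequality $n_*(s;T_V)\le\rho(\{|V|>s\})$ is simply false: take $V=\one_A$ with $A$ a small ball well inside $\Omega$ chosen so that $0<\rho(A)<1$. Then $\rho(\{|V|>s\})=\rho(A)<1$ for every $s\in(0,1)$, so your bound would force $n_*(s;T_{\one_A})=0$ for all $s>0$, i.e.\ $T_{\one_A}=0$, contradicting ${\rm Tr}\,T_{\one_A}=\int_A\varrho\,dx=\rho(A)>0$. (A further warning sign: your layer-cake computation would give \eqref{6} for \emph{all} $p\in(0,\infty)$, which is too strong; letting $p\downarrow0$ in the same example gives $\sum_j s_j(T_{\one_A})^p\to{\rm rank}\,T_{\one_A}=\infty$ while your bound stays $\rho(A)$.) The point is that the singular-value distribution of a \emph{compression} $\Phi^*M_V\Phi$ of a multiplication operator is not controlled pointwise by the distribution function of the symbol; such control is available only at the two endpoints and must then be interpolated.

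The repair is exactly the paper's route, and your framework delivers the endpoints immediately: the map $V\mapsto\Phi^*M_V\Phi$ is linear, with $\|T_V\|\le\|V\|_{L^\infty(\Omega;d\rho)}$ (since $\|\Phi\|=1$), and with $\|T_V\|_1\le\|V\|_{L^1(\Omega;d\rho)}$ --- in the paper this is obtained from the factorization $PVP=F^*e^{i\,{\rm arg}V}F$, where $F$ has kernel $|V(x)|^{1/2}{\mathcal R}(x,y)$ and $\|F\|_2^2=\int_\Omega|V|\varrho\,dx$ by the reproducing property. One then applies real interpolation for linear maps into Schatten ideals (\cite[Theorem 3.1]{bs1}) to get $\|T_V\|_p\le\|V\|_{L^p(\Omega;d\rho)}$ for $p\in[1,\infty)$ and $\|T_V\|_{p,{\rm w}}\le\|V\|_{L^p_{\rm w}(\Omega;d\rho)}$ for $p\in(1,\infty)$; this also explains why the ranges $p\ge1$ and $p>1$ appear, in contrast with the all-$p$ conclusion your argument would give. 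Your parenthetical worry about identifying $T_V$ with the form $\int_\Omega Vu\overline w\,dx$ for rough $V$ is a legitimate but secondary bookkeeping issue; the fatal defect is the false distribution-function estimate.
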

    \begin{proof} Let us consider the operator $PVP$ as defined on $L^2(\Omega)$. Evidently,
    \bel{s50}
    \|T_V\|_p = \|PVP\|_p, \quad \|T_V\|_{p, \rm{w}} = \|PVP\|_{p, \rm{w}}, \quad p \in (0,\infty).
    \ee
    We have $PVP = F^* e^{i\,{\rm arg}\,V} F$ where $F: L^2(\Omega) \to L^2(\Omega)$ is the operator with integral kernel
    $$
    |V(x)|^{1/2} {\mathcal R}(x,y), \quad x,y \in \Omega.
    $$
     Assume  $V \in L^1(\Omega;d\rho)$. Then
    \bel{8}
    \|PVP\|_1 \leq \|F^*\|_2 \|e^{i\,{\rm arg}\,V}\| \|F\|_2 = \|F\|_2^2 = \|V\|_{L^1(\Omega;d\rho)}.
    \ee
    Assume now  $V \in L^\infty(\Omega;d\rho)$. Since $\|P\| =1$ and $d\rho$ is absolutely continuous with respect to the Lebesgue measure,
    \bel{9}
    \|PVP\| \leq \|V\|_{L^\infty(\Omega)} = \|V\|_{L^\infty(\Omega;d\rho)}.
    \ee
     Interpolating between \eqref{8} and \eqref{9}, and applying \cite[Theorem 3.1]{bs1}, we find that
     $$
     \|PVP\|_p \leq \|V\|_{L^p(\Omega;d\rho)}, \quad p \in [1,\infty),
     $$
     $$
     \|PVP\|_{p, {\rm w}} \leq \|V\|_{L^p_{\rm w}(\Omega;d\rho)}, \quad p \in (1,\infty),
     $$
     which combined with \eqref{s50}, implies \eqref{6} and \eqref{7}.
     \end{proof}
     {\em Remark: }
     %(i)
     We believe that the main part of Proposition \ref{p3} is the second one, while the first part is just a by-product of the interpolation method applied, and is obviously less sharp than Proposition \ref{p2} (iii). Let us still point out some of the aspects of estimates \eqref{6} which we consider valuable:
     \begin{itemize}
     \item The estimating constant in \eqref{6} is just equal to one while the constants in \eqref{5} are not explicit and may depend on $\Omega$.
     \item The boundedness of $\Omega$ in Proposition \ref{p2} is essential, while estimates \eqref{6}  remain valid for generic unbounded domains.
     \item Estimates \eqref{6} are given in terms of $V$ itself, while estimates \eqref{5} are given in terms of its Berezin transform.
     \end{itemize}
%(ii)
\subsection{Berezin theory's point of view} Let us recall briefly the Berezin theory of operators with covariant and contravariant symbols (see \cite{ber} or \cite[Section 2, Chapter V]{bershu}). Let $X$ be a separable Hilbert space with scalar product $\langle \cdot , \cdot\rangle_X$ and norm $\|\cdot\|_X$, and let $M$ be a space with measure $\lambda$. Introduce the family $\left\{\eeps_m\right\}_{m \in M}$ such that $\|\eeps_m\|_X = 1$, $m \in M$, and for any $f \in X$ the function $M \ni m \mapsto \langle f,\eeps_m \rangle$ is measurable, and we have
     $$
     \|f\|_X^2 = \int_M |\langle f,\eeps_m \rangle |^2d\lambda(m).
     $$
     Further, define the orthogonal projection $P_m : = \langle\cdot,\eeps_m\rangle_X \, \eeps_m$, $m \in M$. Assume that $a \in L^\infty(M;d\lambda)$ and define the operator
     $$
     T : = \int_M a(m) P_m d\lambda(m),
     $$
     the integral being understood in the weak sense. Finally, set
     $$
     b(m) : = \langle T\eeps_m, \eeps_m\rangle_X, \quad m \in M.
     $$
     Then $a$ is called {\em the contravariant symbol} of the operator $T$, while $b$ is called its {\em covariant symbol}. It is easy to check that we have
     \bel{s1a}
     \sup_{m \in M} |b(m)| \leq \|T\| \leq \|a\|_{L^\infty(M;d\lambda)}.
     \ee
     The harmonic Toeplitz operator $T_V$ fits well in this scheme if we choose
     $$
     X = {\mathcal H}(\Omega), \quad M = \Omega, \quad \lambda = \rho, \quad \eeps_m(x) = \varrho(m)^{-1/2} {\mathcal R}(m,x), \quad m,x \in \Omega.
     $$
     Then $V$ is the contravariant symbol of $T_V$ while its Berezin transform
     $$
     \tilde{V}(m) : = \varrho(m)^{-1}\,\int_M {\mathcal R}(m,y)^2 V(y) dy, \quad m \in \Omega,
     $$
     defined by analogy with \eqref{s2}, is the covariant symbol of $T_V$. From this point of view, if $d\mu = Vdx$ with $V \geq 0$, then the lower bound in \eqref{4} is equivalent to the first inequality in \eqref{s1a}, while \eqref{9} coincides with the second inequality in \eqref{s1a}. Proposition \ref{p2} (i) shows that, generally speaking, the estimates of $\|T_V\|$ in terms of $\tilde{V}$ are sharper than those in terms of $V$. On the other hand, if $V \geq 0$, then we have
     $$
     \|T_V\|_1 = {\rm Tr}\,T_V = \int_\Omega V(m) d\rho(m) = \int_\Omega \tilde{V}(m) d\rho(m).
     $$
     Thus, estimates \eqref{6} - \eqref{7} are obtained by interpolation between the sharp estimate \eqref{8} and the unsharp, in the general case, estimate \eqref{9}.
     Note however that there exist situations where the estimates in terms of $V$ may yield results which are sharp in order (see below Theorem \ref{t1} and the remark after it). \\

       {\em Remark}: The Berezin-Toeplitz operators related to the Fock-Segal-Bargmann holomorphic subspace of $L^2(\rd)$, and their generalizations corresponding to higher Landau levels, are known to play an important role in the spectral and scattering theory of quantum Hamiltonians in constant magnetic fields
    (see e.g. \cite{r0, rw, fr, bbr1, prvb, bbr2, lr}). In particular, Proposition 3.6 of \cite{lr} is an analogue of our Proposition \ref{p3} for such operators (see also
    \cite[Lemma 5.1]{r0} and \cite[Lemma 3.1]{fr} where however no weak Schatten-von Neumann classes were considered).
\subsection{Compactly supported symbols}
    Finally, we establish a result which shows that if the symbol $V$ is compactly supported in $\Omega$, then $T_V \in S_p$ for any $p \in (0,\infty)$, i.e. the singular numbers of $T_V$ decay very rapidly, even if the behaviour of $V$ is quite irregular. In fact, we will replace in this case $V$ by $\phi \in {\mathcal E}'(\Omega)$, the class of distributions over ${\mathcal E}(\Omega) : = C^\infty(\Omega)$. We recall that $\phi \in {\mathcal D}'(\Omega)$, the class of distributions over ${\mathcal D}(\Omega) : = C_0^\infty(\Omega)$, is in ${\mathcal E}'(\Omega)$, if and only if ${\rm supp}\, \phi$ is compact in $\Omega$. If $\phi \in {\mathcal E}'(\Omega)$, we define $T_\phi : \harmo \to \harmo$ as the operator with integral kernel
    $$
    K_\phi(x,y) : = \left(\phi, {\mathcal R}(x,\cdot) {\mathcal R}(\cdot,y)\right), \quad x,y \in \Omega,
    $$
    where $(\cdot,\cdot)$ denotes the pairing between  ${\mathcal E}'(\Omega)$ and ${\mathcal E}(\Omega)$. Of course, if $\phi = \mu$ and $\mu \geq 0$ is a finite Borel measure such that ${\rm supp}\, \mu$ is compact in $\Omega$, then $T_\phi = T_\mu$. \\
    Since ${\rm supp}\,\phi$ is compact in $\Omega$, we have $K_\phi \in C^\infty(\overline{\Omega} \times \overline{\Omega})$. Therefore,
    $$
    s_j(T_\phi) = O(j^{-m}), \quad \forall \, m \in (0,\infty),
    $$
    (see e.g. \cite[Proposition 2.1]{bs1}). Thus, we arrive at
    \begin{pr} \label{p4}
    Let $\Omega \subset \re^d$, $d \geq 2$, be a bounded domain with boundary $\partial \Omega \in C^\infty$.
    Assume that $\phi \in {\mathcal E}'(\Omega)$. Then we have $T_\phi \in S_p(\harmo)$ for any $p \in (0,\infty)$, and, hence,
    \bel{9a}
    n_*(\lambda; T_\phi) = O(\lambda^{-\alpha}), \quad \lambda \downarrow 0,
    \ee
    for any $\alpha \in (0,\infty)$.
    \end{pr}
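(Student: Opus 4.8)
The idea is to show that the kernel $K_\phi$ extends to a function in $C^\infty(\overline{\Omega}\times\overline{\Omega})$ and then to quote the classical fact that an integral operator on a bounded domain with such a kernel has super-polynomially decaying singular values. Granting the smoothness of $K_\phi$ up to the boundary, \cite[Proposition 2.1]{bs1}, applied to the integral operator with kernel $K_\phi$ on the bounded domain $\Omega$ (of which $T_\phi$ is the restriction to $\harmo$, and hence has no larger singular values), gives $s_j(T_\phi)=O(j^{-m})$ as $j\to\infty$ for every $m\in(0,\infty)$. Therefore $\sum_{j}s_j(T_\phi)^p<\infty$ for every $p\in(0,\infty)$, i.e. $T_\phi\in S_p(\harmo)$, and, writing $s_j(T_\phi)\le C_m\,j^{-m}$, the Chebyshev-type bound
$$
n_*(\lambda;T_\phi)=\#\{j:\,s_j(T_\phi)>\lambda\}\le\#\{j:\,C_m\,j^{-m}>\lambda\}\le(C_m/\lambda)^{1/m}
$$
yields $n_*(\lambda;T_\phi)=O(\lambda^{-1/m})$ as $\lambda\downarrow0$; since $m$ is arbitrary and $\lambda^{-\alpha_1}\le\lambda^{-\alpha_2}$ for $0<\alpha_1\le\alpha_2$ and $\lambda\in(0,1)$, this is exactly \eqref{9a} for every $\alpha\in(0,\infty)$.

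The substantive step is the smoothness of $K_\phi$ on $\overline{\Omega}\times\overline{\Omega}$, and here Lemma~\ref{l1} does the work. Set $\kappa:={\rm supp}\,\phi$ and fix a compact neighbourhood $\kappa'$ of the compact set $\kappa\subset\Omega$ with $\kappa'\subset\Omega$, so that $r_0:=\inf_{z\in\kappa'}r(z)>0$. For $z\in\kappa'$ and any $x\in\Omega$ one has $\delta(x,z)\ge r(z)\ge r_0$, so \eqref{a} gives the uniform bounds $|D_x^\alpha D_z^\beta{\mathcal R}(x,z)|\le C_{\alpha,\beta}\,r_0^{-(d+|\alpha|+|\beta|)}$ on $\Omega\times\kappa'$; consequently ${\mathcal R}$ and all its derivatives are uniformly continuous on $\Omega\times\kappa'$, and ${\mathcal R}$ extends to an element of $C^\infty(\overline{\Omega}\times\kappa')$. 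Hence, for each $(x,y)\in\overline{\Omega}\times\overline{\Omega}$, the function $z\mapsto{\mathcal R}(x,z){\mathcal R}(z,y)$ belongs to $\mathcal{E}(\Omega)$, the pairing $K_\phi(x,y)=(\phi,{\mathcal R}(x,\cdot){\mathcal R}(\cdot,y))$ is well defined, and the standard parameter-differentiation lemma for distribution pairings — applicable because $\phi$ has compact support and $(x,y,z)\mapsto{\mathcal R}(x,z){\mathcal R}(z,y)$ is smooth with all its $z$-derivatives depending continuously on $(x,y)\in\overline{\Omega}\times\overline{\Omega}$ (by the Leibniz rule and the uniform bounds just displayed) — shows that $K_\phi\in C^\infty(\overline{\Omega}\times\overline{\Omega})$, with each $D_x^\alpha D_y^\beta K_\phi$ bounded on $\overline{\Omega}\times\overline{\Omega}$.

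I do not anticipate any real obstacle beyond this: the one place a difficulty could arise is the smoothness of $K_\phi$ at $\partial\Omega$, and it does not arise precisely because ${\rm supp}\,\phi$ is a \emph{compact} subset of the \emph{open} set $\Omega$ and therefore stays at a positive distance $r_0$ from $\partial\Omega$, keeping the integration variable away from the diagonal blow-up $\delta(x,y)^{-d}$ of ${\mathcal R}$ recorded in Lemma~\ref{l1}. (Were ${\rm supp}\,\phi$ permitted to reach $\partial\Omega$, $K_\phi$ would in general be smooth only on the open set $\Omega\times\Omega$ and the fast singular-value decay would fail.) Everything else — the parameter-differentiation lemma, the smooth-kernel singular-value estimate of \cite[Proposition 2.1]{bs1}, and the elementary deductions of $T_\phi\in S_p$ and of \eqref{9a} from $s_j(T_\phi)=O(j^{-m})$ — is routine.
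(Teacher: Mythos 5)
Your argument is correct and follows the same route as the paper: the paper's proof consists precisely of the observation that compactness of ${\rm supp}\,\phi$ in $\Omega$ gives $K_\phi \in C^\infty(\overline{\Omega}\times\overline{\Omega})$, followed by the citation of \cite[Proposition 2.1]{bs1} to get $s_j(T_\phi)=O(j^{-m})$ for all $m$, from which membership in every $S_p$ and \eqref{9a} follow. You merely supply the details (via Lemma \ref{l1} and the standard continuity/order estimate for compactly supported distributions) that the paper leaves implicit, and these details are sound.
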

     {\em Remarks}: (i) In Section \ref{symmetry} we will show that if $\Omega$ is the unit ball in $\re^d$, and $V \geq 0$ is compactly supported and ${\rm supp}\,V$ is radially symmetric, then the eigenvalues of $T_V$ decay exponentially fast. Hopefully, in a future work we will extend these results to more general domains, and more general compactly supported $V$.\\
     (ii) Harmonic Toeplitz operators $T_\phi$ with $\phi \in {\mathcal E}'(\Omega)$ were considered in \cite{ar} where, in particular, it was proved that ${\rm rank}\,T_\phi < \infty$, if and only if $\supp \,\phi$ is finite.

%%%%%%%%%%%%%%%%%%%%%%%%%%%%%%%%%%%%%%%%%%
%%%%%%%%%%%%%%%%%%%%%%%%%%%%%%%%%%%%%%%%%%
\section{Spectral asymptotics of $T_V$ for general $V$ of power-like decay at the boundary}
\label{ss23}
%%%%%%%%%%%%%%%%%%%%%%%%%%%%%%%%%%%%%%%%%%
%%%%%%%%%%%%%%%%%%%%%%%%%%%%%%%%%%%%%%%%%%
\subsection{Statement of the main results}
\label{sso1}
In this section we assume that $V : \overline{\Omega} \to [0,\infty)$ is sufficiently regular near $\partial \Omega$, and has a power-like decay at $\partial \Omega$. We investigate the asymptotic behaviour of the discrete spectrum of $T_V$ near the origin. We obtain the main asymptotic term of $n_+(\lambda; T_V)$ as $\lambda \downarrow 0$, and give a sharp estimate of the remainder (see Theorem \ref{t1} below).\\

For the statement of Theorem \ref{t1} we need the following notations. We consider $\partial \Omega$ as a compact $(d-1)$-dimensional Riemannian manifold with metric tensor $g(y) : = \left\{g_{jk}(y)\right\}_{j,k=1}^{d-1}$, $y \in \partial \Omega$, generated by the Euclidean metrics in $\re^d$. For $y \in \partial \Omega$ and $\eta \in T^*_y \partial \Omega = \re^{d-1}$ we set
$$
|\eta| = |\eta|_y : = \left(\sum_{j,k=1}^{d-1} g^{jk}(y) \eta_j \eta_k\right)^{1/2},
$$
where $\left\{g^{jk}(y)\right\}_{j,k=1}^{d-1}$ is the matrix inverse to  $g(y)$. Let $dS(y)$ be the measure induced by $g$ on $\partial \Omega$. As usually, we denote by $L^2(\partial \Omega)$ the Hilbert space $L^2(\partial \Omega; dS(y))$.

    Let $a, \tau \in C^\infty(\overline{\Omega})$ satisfy $a > 0$ on $\overline{\Omega}$,  $\tau > 0$ on $\Omega$, and $\tau = r:={\rm dist}(\cdot,\partial \Omega)$ (see \eqref{sep1}) in a vicinity of $\partial \Omega$. Assume that
    \bel{rez1}
    V(x) = \tau(x)^\gamma a(x),  \quad \gamma \geq 0, \quad x \in \Omega.
    \ee
    Set $a_0 : = a_{|\partial \Omega}$.

    \begin{theorem} \label{t1}
    Assume that $V$ satisfies \eqref{rez1} with $\gamma > 0$. Then we have
    \bel{a2}
    n_+(\lambda; T_V) = {\mathcal C} \, \lambda^{-\frac{d-1}{\gamma}}\left(1 + O(\lambda ^{\frac{1}{\gamma}})\right), \quad \lambda \downarrow 0,
    \ee
    where
    \bel{a3a}
    {\mathcal C}: = \omega_{d-1} \left(\frac{ \Gamma(\gamma + 1)^{\frac{1}{\gamma}}}{4\pi}\right)^{d-1} \int_{\partial \Omega} a_0(y)^{\frac{d-1}{\gamma}}\,dS(y),
    \ee
    $\omega_n = \pi^{n/2}/\Gamma(1+n/2)$ is the Lebesgue measure of the unit ball $B_1 \subset \re^n$, $n \geq 1$, and $\Gamma$ is the Euler gamma function.
    \end{theorem}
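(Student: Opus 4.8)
The plan is to establish the asymptotics \eqref{a2} via the unitary equivalence provided by Proposition \ref{p9}, which identifies $T_V$ (modulo the obvious rescaling by powers of $\lambda$) with a self-adjoint classical pseudo-differential operator $A$ of order $-\gamma$ acting in $L^2(\partial\Omega)$, whose principal symbol I expect to read off explicitly from the decay data. Concretely, near $\partial\Omega$ one uses boundary normal coordinates $x=(y,t)$ with $t=r(x)$, writes harmonic functions in terms of their boundary traces via the Poisson-type operator, and computes that the leading contribution to $\langle T_V u,u\rangle$ comes from the integral $\int_0^\infty t^\gamma e^{-2|\eta|t}\,dt = \Gamma(\gamma+1)(2|\eta|)^{-\gamma-1}$ for the mode with tangential frequency $\eta$; combined with the Jacobian factor this should produce a principal symbol proportional to $a_0(y)^{?}\,|\eta|^{-\gamma}$ — more precisely, after the functional-calculus manipulation below, the relevant operator will have principal symbol $\Gamma(\gamma+1)^{1/\gamma}a_0(y)^{1/\gamma}(2|\eta|)^{-1}$ raised to the power $\gamma$. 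The precise constant bookkeeping is exactly what is needed to land on \eqref{a3a}.

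Granting Proposition \ref{p9}, the core of the argument is a Weyl-type eigenvalue count for a $\Psi$DO of negative order on a compact manifold. I would proceed as follows. First, reduce $n_+(\lambda;T_V)$ to counting eigenvalues of $A$ above $\lambda$; since $A>0$ is elliptic of order $-\gamma$ this is $n_+(\lambda;A) = \#\{\mu_j(A) > \lambda\}$. Next, pass to $B := $ a suitable positive power/function of $A$ — for instance one works with $A^{-1/\gamma}$ (a classical elliptic $\Psi$DO of order $1$, positive, self-adjoint) so that $n_+(\lambda;A) = N(\lambda^{-1/\gamma};B)$ where $N(R;B)=\#\{\mu_j(B)\le R\}$ is the usual counting function for a first-order operator. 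For such $B$ the sharp two-term (here: one term plus $O(R^{d-2})$ relative remainder) Weyl law is classical — via the heat kernel, or the wave-trace/Hörmander method, or Tauberian arguments applied to the resolvent — giving
\bel{weylplan}
N(R;B) = (2\pi)^{-(d-1)} \operatorname{vol}\left\{(y,\eta)\in T^*\partial\Omega : b_1(y,\eta) \le R\right\} R^{d-1} + O(R^{d-2}),
\ee
where $b_1$ is the principal symbol of $B$. Substituting $b_1(y,\eta) = \big(\Gamma(\gamma+1)^{1/\gamma} a_0(y)^{1/\gamma}\big)^{-1}\,(2|\eta|_y)$ and integrating the indicator over the cotangent fiber (a ball of radius $\tfrac12\Gamma(\gamma+1)^{1/\gamma}a_0(y)^{1/\gamma}R$ in the $|\cdot|_y$-metric, of volume $\omega_{d-1}$ times that radius to the power $d-1$), then integrating $dS(y)$ over $\partial\Omega$, should yield exactly ${\mathcal C}R^{d-1}$ with ${\mathcal C}$ as in \eqref{a3a}; finally put $R=\lambda^{-1/\gamma}$, so $R^{d-1}=\lambda^{-(d-1)/\gamma}$ and the remainder $O(R^{d-2}) = O(\lambda^{-(d-2)/\gamma}) = O(\lambda^{-(d-1)/\gamma}\cdot\lambda^{1/\gamma})$, which is precisely the error displayed in \eqref{a2}.

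The main obstacle is the first reduction — proving that $T_V$ is \emph{genuinely} unitarily equivalent (not merely asymptotically comparable) to a \emph{classical} elliptic $\Psi$DO of order $-\gamma$ with a clean principal symbol — which is the content of Proposition \ref{p9} and which I would invoke as given. Assuming it, the remaining subtlety is purely technical: $A$ is elliptic of order $-\gamma$ but forming $A^{-1/\gamma}$ requires $A$ to be invertible, which it need not be (it is only nonnegative with possibly nontrivial kernel, or $0$ in the spectrum as an accumulation point from below in the relevant regime). One handles this in the standard way: since only the behaviour of $n_+(\lambda;T_V)$ as $\lambda\downarrow 0$ matters, one may replace $A$ by $A$ restricted to a spectral subspace bounded away from $0$, or add a smoothing perturbation, or — cleaner — apply the Birman–Schwinger / Weyl-law machinery directly to $A$ as a negative-order operator (e.g. via the results on spectral asymptotics of $\Psi$DOs with symbols homogeneous of negative order, as in Birman–Solomyak), bypassing the power entirely. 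A second point requiring care is that the remainder in \eqref{a2} is claimed in the \emph{relative} form $O(\lambda^{1/\gamma})$, i.e. one genuinely needs the sharp $O(R^{d-2})$ (not merely $o(R^{d-1})$) Weyl remainder for $B$; this is available for smooth first-order elliptic $\Psi$DOs on a closed manifold (Hörmander), but it must be cited with the hypothesis $\partial\Omega\in C^\infty$ doing the work, and the dependence of the $\Psi$DO's full symbol on the smoothness of $a,\tau$ near $\partial\Omega$ checked to be $C^\infty$ so that the sharp remainder applies. I do not expect either of these to cause real trouble, but they are where the writing has to be careful.
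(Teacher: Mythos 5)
Your route is essentially the paper's own: it takes the unitary equivalence of Proposition \ref{p9} as the key input, passes to the inverse $(1/\gamma)$-th power to get a positive first-order elliptic $\Psi$DO on $\partial\Omega$ (the kernel issue you flag is settled in the paper simply by observing that $V>0$ inside $\Omega$ forces ${\rm Ker}\,J^{-1/2}J_VJ^{-1/2}=\{0\}$, so the power is defined via the spectral theorem), and then applies H\"ormander's sharp Weyl law with relative remainder $O(E^{-1})$ before substituting $E=\lambda^{-1/\gamma}$, with the same constant bookkeeping yielding \eqref{a3a}. The only blemish is the redundant factor $R^{d-1}$ multiplying ${\rm vol}\left\{b_1\le R\right\}$ in your displayed Weyl formula, which your subsequent fiberwise computation shows to be a typo rather than a real error.
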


    {\em Remark}: Estimates \eqref{a} with $\alpha = \beta = 0$, and \eqref{b} imply that under the hypotheses of Theorem \ref{t1} we have $V \in L^p_{\rm w}(\Omega; d\rho)$ if and only if $p = \frac{d-1}{\gamma}$. Then, estimate \eqref{7} yields
    \bel{s6a}
    n_+(\lambda; T_V) \leq \lambda^{-\frac{d-1}{\gamma}} \|V\|_{L^{(d-1)/\gamma}_{\rm w}(\Omega;d\rho)}^{(d-1)/\gamma}, \quad \lambda>0,
    \ee
    if $d-1 > \gamma$.
    By \eqref{a2}, we find that under the hypotheses of Theorem \ref{t1}, estimate \eqref{s6a} is sharp in order, provided that $d-1 > \gamma$.\\

    The proof of Theorem \ref{t1} can be found in the next subsection, while Subsection \ref{sso3} contains some extensions of this theorem. \\

    \subsection{Proof of Theorem 3.1.}
    \label{sso2}
    For $s \in \re$ denote by $H^s(\Omega)$ and $H^s(\partial \Omega)$ the Sobolev spaces on $\Omega$ and $\partial \Omega$ respectively. Assume that
    $f \in H^s(\partial \Omega)$, $s \in \re$. Then the boundary-value problem
    \bel{a3}
    \left\{
    \begin{array} {l}
    \Delta u = 0 \quad \mbox{in} \quad \Omega,\\
     u = f \quad \mbox{on} \quad \partial \Omega,
     \end{array}
     \right.
     \ee
     admits a unique solution $u \in H^{s+1/2}(\Omega)$, we have
     \bel{a4}
     \|u\|_{H^{s+1/2}(\Omega)} \asymp  \|f\|_{H^{s}(\partial \Omega)},
     \ee
     and, therefore, the mapping $f \mapsto u$ defines an isomorphism between $H^{s}(\partial \Omega)$ and $H^{s+1/2}(\Omega)$
     (see \cite[ Sections 5, 6, 7, Chapter 2]{lm}). \\

     If $s=0$, we set
     \bel{a5}
     u = Gf.
     \ee
     By \eqref{a4} with $s=0$, and the compactness of the embedding of $H^{1/2}(\Omega)$ into $L^2(\Omega)$, we find that the operator $G: L^2(\partial \Omega) \to L^2(\Omega)$ is compact. By \cite[Theorem 12, Section 2.2]{evans}, we have
     \bel{a5a}
     u(x) = \int_{\partial \Omega} {\mathcal K}(x,y)  f(y) dS(y), \quad x \in \Omega,
     \ee
     where
     \bel{a11}
     {\mathcal K}(x,y) : = -\frac{\partial \mathcal G}{\partial \nu_y}(x,y), \quad x \in \Omega, \quad y \in \partial \Omega,
     \ee
     ${\mathcal G}$ is the Dirichlet Green function associated with $\Omega$, and $\nu$ is the unit outer normal vector at $\partial \Omega$. Note that
     \bel{a5b}
     {\mathcal K} \in C^\infty(\Omega \times \partial \Omega).
     \ee
     \begin{lemma} \label{l2}
     We have
     \bel{a9}
     {\rm Ker}\,G = \{0\},
     \ee
     \bel{a6}
     \overline{{\rm Ran}\,G} = {\mathcal H}(\Omega).
     \ee
     \end{lemma}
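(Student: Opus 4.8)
The plan is to prove \eqref{a9} and \eqref{a6} separately, using the isomorphism property of the Dirichlet problem encapsulated in \eqref{a4} together with elementary Hilbert-space duality.

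For \eqref{a9}: suppose $f \in L^2(\partial\Omega)$ satisfies $Gf = 0$ in $L^2(\Omega)$. Then $u := Gf$ is the (unique) $H^{1/2}(\Omega)$-solution of the Dirichlet problem \eqref{a3}, and $u = 0$ as an element of $L^2(\Omega)$, hence $u \equiv 0$ in $\Omega$. Since $u$ is harmonic (in fact real-analytic) in $\Omega$ and $\partial\Omega \in C^\infty$, the function $u$ extends smoothly up to $\partial\Omega$ and its boundary trace is exactly $f$; but that trace is the trace of the zero function, so $f = 0$. Alternatively, and more cleanly, one invokes directly the isomorphism \eqref{a4}: $\|f\|_{H^0(\partial\Omega)} = \|f\|_{L^2(\partial\Omega)} \asymp \|u\|_{H^{1/2}(\Omega)}$, and $u = 0$ forces $f = 0$. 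Either way, $\Ker G = \{0\}$.

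For \eqref{a6}: since $G : L^2(\partial\Omega) \to L^2(\Omega)$ is bounded with range contained in $\harmo$ (every $Gf$ is harmonic in $\Omega$), we certainly have $\overline{\Ran\,G} \subset \harmo$. For the reverse inclusion it suffices to show that $\Ran\,G$ is dense in $\harmo$, i.e. that any $w \in \harmo$ orthogonal to $\Ran\,G$ in $L^2(\Omega)$ must vanish. So suppose $w \in \harmo$ and $\langle w, Gf\rangle_{L^2(\Omega)} = 0$ for all $f \in L^2(\partial\Omega)$. Using \eqref{a5a}, we compute
\[
0 = \int_\Omega w(x)\,\overline{(Gf)(x)}\,dx = \int_\Omega w(x) \int_{\partial\Omega} \overline{{\mathcal K}(x,y)}\,\overline{f(y)}\,dS(y)\,dx = \int_{\partial\Omega} \left(\int_\Omega w(x)\,\overline{{\mathcal K}(x,y)}\,dx\right)\overline{f(y)}\,dS(y),
\]
the interchange of integration being justified by Fubini once one checks $w(x){\mathcal K}(x,y)$ is integrable on $\Omega \times \partial\Omega$ — here \eqref{a5b} gives smoothness but one still needs integrability up to $\partial\Omega$, which follows because $G$ is a bounded operator into $L^2(\Omega)$, so its kernel ${\mathcal K}$ defines a bounded integral operator and the adjoint $G^*$ has the conjugate kernel. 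Since $f$ is arbitrary, the bracketed function vanishes for $dS$-a.e. $y \in \partial\Omega$, i.e. $G^* w = 0$ in $L^2(\partial\Omega)$. Thus $\overline{\Ran\,G} = \harmo$ is equivalent to $\Ker G^* = \{0\}$, which in turn is dual to $\overline{\Ran\,G^*} = L^2(\partial\Omega)$. So it remains to show $\Ran\,G^*$ is dense in $L^2(\partial\Omega)$; equivalently, $\Ker\,G = \{0\}$ applied to $G^{**} = G$ — but that is circular, so instead I would argue directly: $G^* w = 0$ means $\int_\Omega w(x) \overline{{\mathcal K}(x,y)}\,dx = 0$ for a.e. $y \in \partial\Omega$. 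Now recall from \eqref{a11} that ${\mathcal K}(x,y) = -\partial_{\nu_y}{\mathcal G}(x,y)$ is (up to sign) the Poisson kernel, and the identity $w = G(w_{|\partial\Omega})$ holds for $w \in \harmo$ provided $w$ has an $L^2$ boundary trace; for general $w \in \harmo$ one approximates by $w_\epsilon(x) := w((1-\epsilon)x + \epsilon x_0)$ (for $\Omega$ star-shaped about $x_0$) or, in the general case, uses that harmonic functions in $L^2(\Omega)$ with $\partial\Omega$ smooth do admit boundary values in $H^{-1/2}(\partial\Omega)$ and the reproduction formula $\langle w, {\mathcal K}(\cdot, y)\rangle$ recovers $w(y)$-type data; pairing $G^* w = 0$ against the trace of $w$ then yields $\|w\|^2 = 0$.

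The main obstacle is the delicate point in the second part: establishing the reproducing identity $w = G\,(\text{trace of }w)$, or equivalently $\Ker\,G^* = \{0\}$, for \emph{arbitrary} $w \in \harmo$ rather than for smooth $w$. The cleanest route is probably to avoid traces altogether: observe that $G^* w = 0$ forces $\int_\Omega w\,\overline{Gf}\,dx = 0$ for all $f$, while by density of smooth functions on $\partial\Omega$ and the fact that $\{Gf : f \in C^\infty(\partial\Omega)\}$ contains, e.g., all harmonic polynomials (each harmonic polynomial $p$ equals $G(p_{|\partial\Omega})$ by uniqueness in \eqref{a3}), one concludes $w \perp$ all harmonic polynomials in $L^2(\Omega)$; since harmonic polynomials are dense in $\harmo$ (a standard fact, e.g. via the reproducing kernel expansion \eqref{2} or the Runge-type approximation for harmonic functions), $w = 0$. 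I would present this polynomial-density argument as the core of the proof of \eqref{a6}, as it sidesteps all trace-regularity issues.
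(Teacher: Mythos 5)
Your proof of \eqref{a9} is correct and is exactly the paper's argument: apply the isomorphism \eqref{a4} with $s=0$.

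Your proof of \eqref{a6}, however, has a genuine gap at precisely the step you flag as "a standard fact": the density of harmonic polynomials in $\harmo$. This is \emph{false} for a general bounded domain with $C^\infty$ boundary, because $\Omega$ need not have connected complement. Take $\Omega=\{x\in\re^d : 1<|x|<2\}$, $d\geq 3$, and $u(x)=|x|^{2-d}\in\harmo$. If harmonic polynomials $p_n$ converged to $u$ in $L^2(\Omega)$, then interior elliptic estimates would make $p_n$ converge uniformly on the sphere $|x|=3/2$, hence (by the maximum principle, since each $p_n$ is harmonic on all of $\re^d$) uniformly on the ball $|x|\leq 3/2$ to a function harmonic there and equal to $u$ on the shell $1<|x|<3/2$; unique continuation then forces that limit to be $|x|^{2-d}$, which is unbounded at the origin --- a contradiction. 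So $u$ is not in the closure of the harmonic polynomials, and your orthogonality argument cannot conclude $w=0$. (The appeal to \eqref{2} does not help: the expansion there uses an \emph{arbitrary} orthonormal basis of $\harmo$, not polynomials.) Your alternative route via the reproducing identity $w=G(w_{|\partial\Omega})$ is the right idea but is left unfinished, and the trace-regularity issue you try to sidestep is exactly the crux.

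The paper's proof resolves this directly and briefly: the solvability statement around \eqref{a3}--\eqref{a4} holds for \emph{all} $s\in\re$, in particular $s=-1/2$, so every $u\in\harmo$ is the solution of \eqref{a3} with boundary data $f:=u_{|\partial\Omega}\in H^{-1/2}(\partial\Omega)$. Approximating $f$ by $f_n\in L^2(\partial\Omega)$ in the $H^{-1/2}(\partial\Omega)$ norm and using the two-sided estimate \eqref{a4} with $s=-1/2$ gives $\|Gf_n-u\|_{L^2(\Omega)}\to 0$, whence $u\in\overline{{\rm Ran}\,G}$. If you want to keep a duality formulation, the correct substitute for polynomial density is this same fact: $\Ker G^*\cap\harmo=\{0\}$ because $G^*w=0$ and $w\in\harmo$ imply $0=\langle w,Gf_n\rangle\to\|w\|^2$. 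Either way, the $s=-1/2$ case of \eqref{a4} is indispensable.
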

     \begin{proof}
     Relation \eqref{a9} follows from \eqref{a4} with $s=0$. Let us check \eqref{a6}.
      Pick $u \in  {\mathcal H}(\Omega)$. Then, by \eqref{a3} with $s=-1/2$, we have $f : = u_{|\partial \Omega} \in H^{-1/2}(\partial \Omega)$. Let $f_n \in L^2(\partial \Omega)$, $n \in \N$, and
     \bel{a7}
     \lim_{n \to \infty} \|f_n - f\|_{H^{-1/2}(\partial \Omega)} = 0.
     \ee
     Set $u_n : = G f_n$. Then $u_n \in {\rm Ran}\,G$, $n \in \N$, and by \eqref{a4} with $s=-1/2$, and \eqref{a7}, we have $\lim_{n \to \infty} \|u_n - u\|_{L^{2}(\Omega)} =0$ which implies \eqref{a6}.
     \end{proof}
     Set $J : = G^* G$. Then the operator $J = J^* \geq 0$ is compact in $L^2(\partial \Omega)$. Due to  \eqref{a9}, we have ${\rm Ker}\,J = \{0\}$. Let $\left\{\lambda_j\right\}_{j \in \N}$ be the non-increasing sequence of the eigenvalues $\lambda_j>0$ of $J$, and let
     $\left\{\phi_j\right\}_{j \in \N}$ be the corresponding orthonormal eigenbasis in $L^2(\partial \Omega)$ with $J \phi_j = \lambda_j \phi_j$, $j \in \N$. Define the operator $J^{-1}$, self-adjoint in $L^2(\partial \Omega)$, by
     \bel{sep3}
     J^{-1}u : = \sum_{j \in \N} \lambda_j^{-1} \langle u, \phi_j\rangle \phi_j, \quad
      {\rm Dom}\,J^{-1} : = \left\{u \in L^2(\partial \Omega) \, | \, \sum_{j \in \N} \lambda_j^{-2} \left|\langle u, \phi_j\rangle\right|^2 < \infty\right\},
     \ee
     $\langle \cdot,\cdot\rangle$ being the scalar product in $L^2(\partial \Omega)$. Evidently, $\overline{J J^{-1}} = J^{-1} J = I$. \\

     Further, write the polar decomposition of the operator $G = U |G| = U J^{1/2}$ where $U : L^2(\partial \Omega) \to L^2(\Omega)$ is an isometric operator. By Lemma \ref{l2}, we have ${\rm Ker}\,U = \{0\}$ and ${\rm Ran}\,U = {\mathcal H}(\Omega)$. Thus, we obtain the following
     \begin{pr} \label{p7}
     The orthogonal projection $P$ onto ${\mathcal H}(\Omega)$ satisfies
     \bel{a8}
     P = GJ^{-1}G^* = U U^*.
     \ee
     \end{pr}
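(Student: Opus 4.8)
The plan is to reduce everything to the single clean identity $UU^*=P$, and then to recover $GJ^{-1}G^*=P$ from it, keeping track of the fact that $J^{-1}$ is unbounded so that $GJ^{-1}G^*$ is a priori only densely defined.

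First I would prove $UU^*=P$. Because ${\rm Ker}\,G=\{0\}$ by \eqref{a9}, the operator $U$ in the polar decomposition $G=UJ^{1/2}$ is a genuine isometry, $U^*U=I$ on $L^2(\partial\Omega)$ (this is the property already recorded before the proposition). Hence $UU^*$ is self-adjoint and $(UU^*)^2=U(U^*U)U^*=UU^*$, so $UU^*$ is an orthogonal projection. Its range is exactly ${\rm Ran}\,U$: indeed ${\rm Ran}(UU^*)\subseteq{\rm Ran}\,U$ trivially, while $UU^*(Uv)=U(U^*U)v=Uv$ gives the reverse inclusion. Since ${\rm Ran}\,U$ is closed (being the range of an isometry) and, by Lemma \ref{l2}, equals $\harmo$, the projection $UU^*$ is the orthogonal projection onto $\harmo$, i.e. $UU^*=P$.

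Next, for $GJ^{-1}G^*=P$ I would check that $GJ^{-1}G^*$ coincides with the bounded operator $P$ on a dense subspace and therefore has $P$ as its closure. Take the subspace ${\rm Ran}\,G+\harmo^\perp$, which is dense in $L^2(\Omega)$ since $\overline{{\rm Ran}\,G}=\harmo$ by \eqref{a6}. If $u=Gf\in{\rm Ran}\,G$, then $G^*u=G^*Gf=Jf\in{\rm Ran}\,J\subset{\rm Dom}\,J^{-1}$, so $J^{-1}G^*u=J^{-1}Jf=f$ (using $J^{-1}J=I$) and $GJ^{-1}G^*u=Gf=u=Pu$; if $u\in\harmo^\perp=(\overline{{\rm Ran}\,G})^\perp={\rm Ker}\,G^*$, then $G^*u=0$ and $GJ^{-1}G^*u=0=Pu$. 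Thus $GJ^{-1}G^*$ agrees with $P$ on a dense subspace, and boundedness of $P$ forces $\overline{GJ^{-1}G^*}=P$. The same conclusion follows more formally by substituting $G=UJ^{1/2}$ and $G^*=J^{1/2}U^*$, which gives $GJ^{-1}G^*=UJ^{1/2}J^{-1}J^{1/2}U^*$, and then collapsing the middle factor to the identity via $\overline{JJ^{-1}}=J^{-1}J=I$, so that $GJ^{-1}G^*=UU^*=P$.

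I expect the only delicate point to be this domain bookkeeping: since $J^{-1}$ is unbounded one cannot simply manipulate $GJ^{-1}G^*$ as though all compositions were everywhere defined, and the precise statement is that $GJ^{-1}G^*$ is densely defined with closure $P$ (equivalently, that it agrees with $P$ on a dense domain such as ${\rm Ran}\,G+\harmo^\perp$). Everything else — the idempotency and self-adjointness of $UU^*$, and the identification ${\rm Ran}\,U=\harmo$ — is immediate from the polar decomposition together with Lemma \ref{l2}.
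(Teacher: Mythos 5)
Your argument is correct and follows the same route the paper takes: the paper states Proposition \ref{p7} as an immediate consequence of the polar decomposition $G=UJ^{1/2}$ and Lemma \ref{l2}, with no written proof, and your verification that $UU^*$ is the orthogonal projection onto $\mathrm{Ran}\,U=\harmo$ together with the identification of $GJ^{-1}G^*$ on the dense subspace $\mathrm{Ran}\,G+\harmo^\perp$ is exactly the intended filling-in of details. Your attention to the domain of the unbounded factor $J^{-1}$ is a welcome precision that the paper only acknowledges implicitly (via the phrase ``the closure of'' in Proposition \ref{p8}).
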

     Assume that $V$ satisfies \eqref{rez1} with $\gamma \geq 0$, and set $J_V : = G^*VG$; from this point of view, we have $J = J_1$.
     \begin{pr} \label{p8}
     Let $V$ satisfy \eqref{rez1} with $\gamma \geq 0$. Then the operator $\ttv$  is unitarily equivalent to (the closure of) the operator $J^{-1/2} J_V J^{-1/2}$.
     \end{pr}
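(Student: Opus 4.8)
The plan is to exploit the polar decomposition $G = UJ^{1/2}$ together with the identity $P = UU^*$ from Proposition 3.4, and to transport the Toeplitz operator $T_V = PVP$ (viewed as acting on $L^2(\Omega)$, as in the proof of Proposition 2.3) to $L^2(\partial\Omega)$ via $U$. First I would write, for $u\in\mathcal H(\Omega)$,
\[
T_V u = PVPu = UU^* V\, UU^* u,
\]
so that $T_V$ acting on $\mathcal H(\Omega) = \operatorname{Ran} U$ is unitarily equivalent, through the isometry $U: L^2(\partial\Omega)\to\mathcal H(\Omega)$, to the operator $U^* V U$ on $L^2(\partial\Omega)$. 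Indeed $U^*U = I$ on $L^2(\partial\Omega)$ and $UU^* = P$, so $U$ implements a unitary equivalence between $T_V|_{\mathcal H(\Omega)}$ and $U^*VU$. It remains to identify $U^*VU$ with $J^{-1/2}J_V J^{-1/2}$.

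For this, recall $G = UJ^{1/2}$, hence $U = GJ^{-1/2}$ on $\operatorname{Ran} J^{1/2}$, which is dense in $L^2(\partial\Omega)$ since $\operatorname{Ker} J = \{0\}$; and $U^* = J^{-1/2}G^*$ on the corresponding domain. Therefore, at least formally,
\[
U^* V U = J^{-1/2} G^* V G J^{-1/2} = J^{-1/2} J_V J^{-1/2}.
\]
The substantive point — and the one I expect to be the main obstacle — is to make this computation rigorous as an operator identity, i.e. to check that $U^*VU$ and $J^{-1/2}J_V J^{-1/2}$ have the same domain and agree there, so that one can legitimately speak of "the closure" of $J^{-1/2}J_VJ^{-1/2}$ and assert that $T_V$ (a bounded, indeed compact, operator) is unitarily equivalent to it. Here one must use that $V$ has the specific form \eqref{rez1}: the factor $\tau^\gamma$ vanishes at $\partial\Omega$, which is what makes $G^*VG$ genuinely smoothing relative to $J = G^*G$ and guarantees that $J^{-1/2}J_V J^{-1/2}$ is bounded (in fact compact), consistent with Proposition 2.1(ii). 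Concretely, I would argue that $V^{1/2}G$ maps $L^2(\partial\Omega)$ into $L^2(\Omega)$ with $(V^{1/2}G)^*(V^{1/2}G) = J_V$, and that $V^{1/2}G J^{-1/2}$ extends from $\operatorname{Dom} J^{-1/2}$ to a bounded operator $B$ on $L^2(\partial\Omega)$; then $J^{-1/2}J_V J^{-1/2}$ is closable with closure $B^*B$, while on the other hand $V^{1/2}U = V^{1/2}GJ^{-1/2}$ (densely) equals $B$, so $U^*VU = (V^{1/2}U)^*(V^{1/2}U) = B^*B$ as well.

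Thus the skeleton is: (1) reduce $T_V$ on $\mathcal H(\Omega)$ to $U^*VU$ on $L^2(\partial\Omega)$ using $P = UU^*$, $U^*U = I$; (2) use $U = GJ^{-1/2}$ (densely defined via the spectral decomposition \eqref{sep3} of $J$) to write $U^*VU = J^{-1/2}J_VJ^{-1/2}$ on $\operatorname{Dom}J^{-1/2}$; (3) verify via the decay hypothesis \eqref{rez1} that $V^{1/2}GJ^{-1/2}$ is bounded, so that both sides have a common bounded (compact) closure and the unitary equivalence holds rigorously. Step (3) is where the real work lies; steps (1) and (2) are essentially algebraic once Proposition 3.4 and the polar decomposition are in hand. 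Note that the case $\gamma = 0$ is included in the statement, and there boundedness of $V^{1/2}GJ^{-1/2}$ still holds (with $V$ bounded and bounded away from zero near $\partial\Omega$, $J_V \asymp J$), though of course $T_V$ need not be compact then — consistent with Proposition 3.5 asserting only unitary equivalence, not compactness.
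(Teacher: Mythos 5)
Your proposal is correct and follows essentially the same route as the paper: the polar decomposition $G=UJ^{1/2}$ together with $P=UU^*$ from Proposition \ref{p7} gives $PVP=U\,J^{-1/2}J_VJ^{-1/2}\,U^*$, and $U$ maps $L^2(\partial\Omega)$ unitarily onto ${\mathcal H}(\Omega)$. The paper records only this algebraic identity; your extra care about domains and the closure (via the bounded extension of $V^{1/2}GJ^{-1/2}$) is a harmless elaboration, noting only that boundedness there needs just $V\in L^\infty$, not the vanishing of $\tau^\gamma$ at $\partial\Omega$, as you yourself observe for $\gamma=0$.
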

     \begin{proof}
     By \eqref{a8}, we have
     $$
     PVP = U J^{-1/2} G^* V G J ^{-1/2} U^* = U J^{-1/2} J_V J^{-1/2} U^*,
     $$
     and the operator $U$ maps unitarily $L^2(\partial \Omega)$ onto ${\mathcal H}(\Omega)$.
     \end{proof}
    \begin{pr} \label{p9}
    Under the assumptions of Proposition \ref{p8} the operator $J^{-1/2} J_V J^{-1/2}$ is a $\Psi${\rm DO}  with principal symbol
    \bel{a1}
    2^{-\gamma} \Gamma(\gamma + 1) |\eta|^{-\gamma} a_0(y), \quad (y,\eta) \in T^*\partial \Omega.
    \ee
    \end{pr}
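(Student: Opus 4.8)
\emph{Overall strategy.} The plan is to localize near $\partial\Omega$, recognize $G$ there as a Poisson (potential) operator of Boutet de Monvel type, compute $J_V=G^*VG$ by carrying out the integration in the normal variable explicitly, and finally combine this with the functional calculus for the elliptic operator $J=J_1$. First I would fix $\eps_0>0$ small and introduce boundary normal coordinates $(y,t)\in\partial\Omega\times[0,\eps_0)$ in which $\tau=t$ and the Euclidean Laplacian reads $\Delta=\partial_t^2+b(y,t)\partial_t+\Delta_{g_t}$, with $\Delta_{g_t}$ the Laplace--Beltrami operator of the induced metric $g_t$ on $\{r=t\}$ and $b\in C^\infty$. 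If $\chi\in C_0^\infty(\Omega)$, then for $f\in H^{-N}(\partial\Omega)$ the function $Gf$ is harmonic, hence smooth, in $\Omega$, so $\chi Gf\in C_0^\infty(\Omega)$ and $G^*\chi Gf\in C^\infty(\partial\Omega)$; thus $G^*\chi G$ is smoothing and only the behaviour of $V$ in the collar matters. There, since the Dirichlet problem \eqref{a3} is an elliptic boundary value problem, $G$ agrees, up to a smoothing operator, with a Poisson operator whose amplitude $g(y,t,\eta)$ has an asymptotic expansion; solving the symbol equations for $\Delta$ with the normalization $g(y,0,\eta)=1$ forced by the boundary condition, the leading term is $g_0(y,t,\eta)=e^{-t|\eta|_y}$ (the branch decaying as $t|\eta|_y\to\infty$), and the lower-order terms are of the form $q_j(y,t,\eta)e^{-t|\eta|_y}$ with $q_j$ positively homogeneous of degree $-j$ in $\eta$ and polynomial in $t|\eta|_y$. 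I would either cite the Boutet de Monvel calculus here or carry out this standard iterative construction.

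\emph{Computing $J_V$.} Writing locally $Gf(y,t)=(2\pi)^{-(d-1)}\int e^{i(y-y')\cdot\eta}g(y,t,\eta)f(y')\,dy'\,d\eta$ and using $V(y,t)=t^\gamma a(y,t)$ with $a\in C^\infty(\overline\Omega)$, $a(y,0)=a_0(y)$, the operator $J_V=G^*VG$ has, modulo smoothing, the local amplitude $\int_0^{\eps_0}\overline{g(y,t,\eta)}\,t^\gamma a(y,t)\,g(y,t,\eta)\,dt$. Expanding $a(y,t)=\sum_{k\ge0}a_k(y)t^k$ and the $q_j$'s, using
\[
\int_0^\infty t^{\gamma+m}e^{-2t|\eta|_y}\,dt=\frac{\Gamma(\gamma+m+1)}{(2|\eta|_y)^{\gamma+m+1}},\qquad m\in\Z_+,
\]
and noting that the tail $\int_{\eps_0}^\infty$ and the parametrix remainders contribute only $O(e^{-c|\eta|_y})$, I would conclude that $J_V$ is a classical $\Psi$DO on $\partial\Omega$ whose symbol expands in homogeneity degrees $-(\gamma+1),-(\gamma+2),\dots$ (all $\leq-(\gamma+1)$ precisely because of the factor $t^\gamma$, with equality only for the leading terms); hence its order is exactly $-(\gamma+1)$, it is elliptic since $a_0>0$, and its principal symbol is
\[
a_0(y)\,\frac{\Gamma(\gamma+1)}{(2|\eta|_y)^{\gamma+1}}=\frac{\Gamma(\gamma+1)}{2^{\gamma+1}}\,a_0(y)\,|\eta|_y^{-(\gamma+1)}.
\]
The case $\gamma=0$, $a\equiv1$ shows in particular that $J=J_1$ is an elliptic $\Psi$DO of order $-1$ with principal symbol $\tfrac12|\eta|_y^{-1}$.

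\emph{Conclusion.} Since $J$ is self-adjoint, positive and injective (see the text after \eqref{a9}) and elliptic of order $-1$, $J^{-1}$ is an elliptic $\Psi$DO of order $1$, and Seeley's theorem on complex powers shows that $J^{-1/2}$, defined by \eqref{sep3}, is an elliptic $\Psi$DO of order $1/2$ with principal symbol $\bigl(\tfrac12|\eta|_y^{-1}\bigr)^{-1/2}=\sqrt2\,|\eta|_y^{1/2}$. Composing, $J^{-1/2}J_VJ^{-1/2}$ is a $\Psi$DO of order $\tfrac12-(\gamma+1)+\tfrac12=-\gamma$, and by multiplicativity of principal symbols its principal symbol is
\[
\sqrt2\,|\eta|_y^{1/2}\cdot\frac{\Gamma(\gamma+1)}{2^{\gamma+1}}\,a_0(y)\,|\eta|_y^{-(\gamma+1)}\cdot\sqrt2\,|\eta|_y^{1/2}=2^{-\gamma}\,\Gamma(\gamma+1)\,|\eta|_y^{-\gamma}\,a_0(y),
\]
which is \eqref{a1}. (As a sanity check, $\gamma=0$ gives principal symbol $a_0(y)$, consistent with $\sigma_{\rm ess}(T_V)=V(\partial\Omega)$ from Proposition \ref{p1}.)

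\emph{Main obstacle.} I expect the technical heart to be the local analysis in the collar: establishing that $G$ is, mod smoothing, a Poisson operator with the stated symbol expansion, and that $G^*VG$ is a genuine classical $\Psi$DO despite the non-smooth (conormal) factor $\tau^\gamma$ — the mechanism being that the $t^\gamma$ singularity at $\partial\Omega$ is traded, via the normal integration $\int_0^\infty t^\gamma e^{-2t|\eta|}\,dt$, for the (in general non-integer) order $-(\gamma+1)$. One must also check carefully that every cutoff and every parametrix remainder becomes smoothing after being sandwiched between $G^*$ and $G$, which again rests on the interior smoothness of harmonic functions.
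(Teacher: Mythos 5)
Your proposal is correct and follows essentially the same route as the paper: the paper simply cites Engli\v{s} \cite{e} (who uses the Boutet de Monvel calculus) for the fact that $J_V$ is a $\Psi$DO with principal symbol $2^{-\gamma-1}\Gamma(\gamma+1)|\eta|^{-\gamma-1}a_0(y)$, whereas you sketch the underlying Poisson-operator computation (collar localization, leading amplitude $e^{-t|\eta|_y}$, normal integration of $t^\gamma e^{-2t|\eta|_y}$) that yields exactly that symbol. The remaining step --- $J^{-1/2}$ is a $\Psi$DO with principal symbol $2^{1/2}|\eta|^{1/2}$ and principal symbols multiply under composition --- is identical to the paper's argument.
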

    \begin{proof} Using the  pseudo-differential calculus due to L. Boutet de Monvel (see  \cite{bdm, bdmg}), M. Engli\v{s} showed recently in \cite[Sections 6, 7]{e} that if $V$ satisfies  \eqref{rez1} with $\gamma \geq 0$, then the operator $J_V$ is a $\Psi$DO with principal symbol
    $$
    2^{-\gamma-1} \Gamma(\gamma + 1) |\eta|^{-\gamma-1} a_0(y), \quad  (y,\eta) \in T^*\partial \Omega.
    $$
    In particular, $J = J_1$ is a $\Psi$DO with principal symbol $2^{-1} |\eta|^{-1}$. Then the pseudo-differential calculus (see e.g. \cite[Chapters I, II]{shu}) easily implies that $J^{-1/2}$ is a $\Psi$DO with principal symbol $2^{1/2} |\eta|^{1/2}$, and $J^{-1/2} J_V J^{-1/2}$ is a $\Psi$DO   with principal symbol defined in \eqref{a1}.
    \end{proof}

    Now we are in position to prove Theorem \ref{t1}. It is easy to see that under its assumptions we have ${\rm Ker}\,J^{-1/2} J_V J^{-1/2} = \{0\}$. Using the spectral theorem, define the operator
    $$
      A : = \left(J^{-1/2} J_V J^{-1/2}\right)^{-1/\gamma}
    $$
    (cf. \eqref{sep3}). Then, by the pseudo-differential calculus, $A$ is a  $\Psi$DO with principal symbol
    $$
    2 \Gamma(\gamma + 1)^{-1/\gamma} |\eta| a_0(y)^{-1/\gamma}, \quad (y,\eta) \in T^*\partial \Omega.
    $$
    By Proposition \ref{p8} and the spectral theorem, we have
    \bel{s6}
    n_+(\lambda; \ttv) = n_+(\lambda; J^{-1/2} J_V J^{-1/2}) = {\rm Tr}\,\one_{\left(-\infty, \lambda^{-1/\gamma}\right)}(A), \quad \lambda > 0.
    \ee
    A classical result of L. H\"ormander \cite{horman} easily implies that
    \bel{sep5}
    {\rm Tr}\,\one_{\left(-\infty, E\right)}(A) = {\mathcal C} E^{d-1}(1 + O(E^{-1})), \quad E \to \infty,
    \ee
    the constant ${\mathcal C}$ being defined in \eqref{a3a}. Combining \eqref{s6} and \eqref{sep5}, we arrive at \eqref{a2}.\\

    {\em Remark}: The natural idea to parametrize the functions $u \in {\mathcal H}(\Omega)$ by their restrictions on $\partial \Omega$ has been used in the theory of harmonic Toeplitz operators and  related areas by various authors; it could be traced back at least to the classical work \cite{bdm}, and has been recently applied in \cite{e} in order to obtain a suitable representation of the operator $J_V$. We would like to mention as well the article \cite{bs3} where the authors consider the operator generated by the ratio of two quadratic differential forms defined on the solutions of a homogeneous elliptic equation. The order of the numerator is lower than the order of the denominator, and, since the domain considered is supposed to be bounded and to have a regular boundary, the operator generated by the ratio is compact. \\
    The harmonic Toeplitz operator $T_V$ could be interpreted as the operator generated by the quadratic-form ratio
    \bel{bx1}
    \frac{\int_\Omega V |u|^2\,dx}{\int_\Omega |u|^2\,dx}, \quad u \in  {\mathcal H}(\Omega).
    \ee
    Note that both the numerator and the denominator in \eqref{bx1} are of zeroth order, and the compactness of $T_V$ is now due to the fact that $V$ vanishes at $\partial \Omega$. \\
    In spite of the differences between the operators considered in \cite{bs3}, and the harmonic Toeplitz operators studied here, the unitary equivalence of $T_V$ and $J^{-1/2} J_V J^{-1/2}$ established in our Proposition \ref{p8} has much in common with the reduction to a $\Psi$DO on $\partial \Omega$, performed in \cite{bs3}.\\

\subsection{Extensions of Theorem 3.1.}
    \label{sso3}
    In Theorem \ref{t1}, we assumed that $V$ was positive and smooth inside $\Omega$. In this section, we show that the result remains valid for more general $V$ which satisfy \eqref{rez1} only near $\partial \Omega$.

    \begin{follow} \label{fo1}
     Let $V$ satisfy the assumptions of Theorem \ref{t1}, and $\phi \in {\mathcal E}'(\Omega; \re)$.
     Then we have
      \bel{o4}
    n_+(\lambda; T_{V+\phi}) = {\mathcal C} \, \lambda^{-\frac{d-1}{\gamma}}\left(1 + O(\lambda ^{\frac{\eps}{\gamma}})\right), \quad \lambda \downarrow 0,
    \ee
     where $T_{V + \phi} : = T_V + T_\phi$, ${\mathcal C}$ is the constant defined in \eqref{a3a}, $\eps = 1$ if $d \geq 3$, and   $\eps < 1$ is arbitrary if $d = 2$.
     \end{follow}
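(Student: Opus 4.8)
The plan is to deduce Corollary \ref{fo1} from Theorem \ref{t1} by a perturbation argument based on the Weyl inequalities \eqref{23} together with the rapid decay of the singular values of $T_\phi$ furnished by Proposition \ref{p4}. First I would record that, since $\phi \in {\mathcal E}'(\Omega;\re)$, the operator $T_\phi = T_\phi^*$ is compact and, by \eqref{9a}, satisfies $n_*(s;T_\phi) = n_\pm(s;T_\phi) = O(s^{-\alpha})$ as $s \downarrow 0$ for every $\alpha \in (0,\infty)$; in particular this holds with $\alpha$ as large as we please. The operator $T_{V+\phi} = T_V + T_\phi$ is self-adjoint and compact, so $n_+(\lambda;T_{V+\phi})$ is well defined for $\lambda>0$.

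The core of the argument is a standard splitting. Fix $\theta \in (0,1)$ to be optimized at the end, and write, using \eqref{23} with $s_1 = (1-\theta)\lambda$, $s_2 = \theta\lambda$,
\bel{o5}
n_+(\lambda; T_{V+\phi}) \leq n_+((1-\theta)\lambda; T_V) + n_+(\theta\lambda; T_\phi),
\ee
and symmetrically, writing $T_V = T_{V+\phi} + (-T_\phi)$,
\bel{o6}
n_+((1+\theta)\lambda; T_V) \leq n_+(\lambda; T_{V+\phi}) + n_+(\theta\lambda; -T_\phi).
\ee
Here I would choose $\theta = \theta(\lambda) = \lambda^{\eps/\gamma}$, which tends to $0$ as $\lambda \downarrow 0$, so that the arguments $(1\mp\theta)\lambda = \lambda(1 + O(\lambda^{\eps/\gamma}))$ are of the same order as $\lambda$. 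Applying Theorem \ref{t1} to the first terms on the right of \eqref{o5} and \eqref{o6} gives
$$
n_+((1\mp\theta)\lambda; T_V) = {\mathcal C}\,\bigl((1\mp\theta)\lambda\bigr)^{-\frac{d-1}{\gamma}}\bigl(1 + O(\lambda^{1/\gamma})\bigr) = {\mathcal C}\,\lambda^{-\frac{d-1}{\gamma}}\bigl(1 + O(\lambda^{\eps/\gamma})\bigr),
$$
since $(1\mp\theta)^{-\frac{d-1}{\gamma}} = 1 + O(\theta) = 1 + O(\lambda^{\eps/\gamma})$ and $\eps \leq 1$. For the remainder terms, by \eqref{9a} applied with a sufficiently large exponent $\alpha$ we get $n_\pm(\theta\lambda; \pm T_\phi) = O((\theta\lambda)^{-\alpha}) = O(\lambda^{-\alpha(1+\eps/\gamma)})$; choosing $\alpha$ small enough that $\alpha(1 + \eps/\gamma) < \frac{d-1}{\gamma} - \frac{\eps}{\gamma} = \frac{d-1-\eps}{\gamma}$ — which is possible precisely because $\alpha>0$ is at our disposal and $d-1-\eps>0$ (note $d \geq 2$ and $\eps<1$ when $d=2$, while $\eps=1 < d-1$ when $d\geq 3$) — makes the contribution of $T_\phi$ of order $o(\lambda^{-\frac{d-1-\eps}{\gamma}})$, hence absorbed into the $O(\lambda^{\eps/\gamma})$ relative error. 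Combining \eqref{o5}, \eqref{o6} and these estimates yields the two-sided bound
$$
{\mathcal C}\,\lambda^{-\frac{d-1}{\gamma}}\bigl(1 + O(\lambda^{\eps/\gamma})\bigr) \leq n_+(\lambda; T_{V+\phi}) \leq {\mathcal C}\,\lambda^{-\frac{d-1}{\gamma}}\bigl(1 + O(\lambda^{\eps/\gamma})\bigr),
$$
which is \eqref{o4}.

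The only genuinely delicate point is bookkeeping with the exponents: one must verify that the freedom in $\alpha$ in Proposition \ref{p4} really suffices to push the $T_\phi$-contribution below the target remainder order $\lambda^{-\frac{d-1-\eps}{\gamma}}$, uniformly in the choice of $\theta(\lambda)$, and that $\eps = 1$ is admissible exactly when $d \geq 3$ (so that $d - 1 - \eps \geq 1 > 0$) whereas for $d = 2$ one needs $\eps < 1$ strictly to keep $d - 1 - \eps = 1 - \eps > 0$. This is precisely why the statement distinguishes the two cases. I do not expect any serious obstacle beyond this elementary optimization of constants; no new analytic input is needed, only Theorem \ref{t1}, Proposition \ref{p4}, and the Weyl inequalities \eqref{23}.
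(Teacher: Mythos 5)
Your argument is correct and is essentially the paper's own proof: both split via the Weyl inequalities \eqref{23} with a $\lambda$-dependent threshold (your $\theta\lambda$ with $\theta=\lambda^{\eps/\gamma}$ is the paper's $\lambda^{1+\theta}$ with exponent $\theta\geq 1/\gamma$), apply Theorem \ref{t1} to the shifted argument, and absorb the $T_\phi$ contribution using the freedom of $\alpha$ in \eqref{9a}, with the case split $d\geq 3$ versus $d=2$ arising for exactly the reason you give. Only cosmetic slips: you want $\alpha$ \emph{small} (as your final inequality correctly requires), not ``sufficiently large,'' and $n_\pm(s;T_\phi)\leq n_*(s;T_\phi)$ rather than equality — neither affects the argument.
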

     \begin{proof}
    The Weyl inequalities \eqref{23} imply
     $$
    n_+(\lambda(1+\lambda^\theta); T_V) - n_-(\lambda^{1+\theta}; T_\phi) \leq
    $$
    $$
     n_+(\lambda; T_{V+\phi}) \leq
     $$
     \bel{o1}
     n_+(\lambda(1-\lambda^\theta); T_V) + n_+(\lambda^{1+\theta}; T_\phi),
     \ee
     for $\lambda \in (0,1)$ and $\theta > 0$. By \eqref{a2},
     $$
     n_+(\lambda(1\pm\lambda^\theta); T_V) =
     $$
     \bel{o2}
     {\mathcal C}\left(\lambda(1\pm\lambda^\theta)\right)^{-\frac{d-1}{\gamma}} + O\left(\lambda^{-\frac{d-2}{\gamma}}\right) =
     {\mathcal C} \lambda^{-\frac{d-1}{\gamma}} + O\left(\lambda^{-\frac{d-2}{\gamma}}\right), \quad \lambda \in (0,1),
     \ee
     provided that $\theta > 1/\gamma$. Next, by estimate \eqref{9a}, we have
     \bel{o3}
     n_{\pm}(\lambda^{1+\theta}; T_\phi) = O(\lambda^{-\alpha(1+\theta)}), \quad \lambda > 0,
     \ee
     for any $\alpha \in (0,\infty)$. Assume $d \geq 3$ and choose $\alpha \in \left(0, \frac{d-2}{\gamma(1+\theta)}\right)$. Then \eqref{o4} follows from
     \eqref{o1} - \eqref{o3}. If $d=2$, then we can pick any $\varepsilon < 1$ and choose $\alpha \in \left(0, \frac{1-\varepsilon}{\gamma(1+\theta)}\right)$, in order to check that in this case \eqref{o1} -- \eqref{o3} again imply \eqref{o4}.
     \end{proof}

    {\em Remarks}: (i) Corollary \ref{fo1} implies that if $d \geq 3$, then Theorem \ref{t1} remains true if we replace $T_V$ by $T_{V+\phi}$ with $\phi \in {\mathcal E}'(\Omega; \re)$. In particular, it is valid also for potentials $V \in L^1_{\rm loc}(\Omega; \re)$ which satisfy \eqref{rez1} only in a neighborhood of $\partial \Omega$.\\
    (ii) Arguing as in the proof of Theorem \ref{t1} (see Propositions \ref{p8} and \ref{p9}), we can show that $T_{V+\phi}$ with $\phi \in {\mathcal E}'(\Omega; \re)$ is unitarily equivalent to self-adjoint $\Psi$DO with principal symbol defined in \eqref{a1}. The only problem to extend in a straightforward manner our proof of Theorem \ref{t1} to $T_{V+\phi}$ is that this operator may have a non trivial kernel unless, for example, $\phi \geq 0$. In particular, if $d=2$ and $\phi \in {\mathcal E}'(\Omega; \re)$ satisfies $\phi \geq 0$, then \eqref{o4} holds also for $\eps = 1$.

    %%%%%%%%%%%%%%%%%%%%%%%%%%%%%%%%%%%%%%%%%%
%%%%%%%%%%%%%%%%%%%%%%%%%%%%%%%%%%%%%%%%%%
\section{Spectral properties of compactly supported $T_V$}
\label{symmetry}
%%%%%%%%%%%%%%%%%%%%%%%%%%%%%%%%%%%%%%%%%%
%%%%%%%%%%%%%%%%%%%%%%%%%%%%%%%%%%%%%%%%%%
In this section we assume that $\Omega = B_1$ where
$$
B_R : = \left\{x \in \re^d \, | \, |x| < R\right\}, \quad d \geq 2, \quad R \in (0,\infty).
$$
Thus, $\partial \Omega = {\mathbb S}^{d-1} : = \left\{x \in \re^d \, | \, |x| = 1\right\}$.
The space ${\mathcal H}(B_1)$ admits an explicit orthonormal eigenbasis which we are now going to describe.
 Recall that $k(k+d-2)$, $k \in {\mathbb Z}_+$, are the eigenvalues of the Beltrami-Laplace operator $-\Delta_{{\mathbb S}^{d-1}}$, self-adjoint in $L^2({\mathbb S}^{d-1})$ (see e.g. \cite[Section 22]{shu}). Moreover,
$$
{\rm dim}\, {\rm Ker}\,\left(-\Delta_{{\mathbb S}^{d-1}} - k(k+d-2) I\right) = : m_k =
\binom{d+k-1}{d-1} - \binom{d+k-3}{d-1}
$$
where  $\binom{m}{n} = \frac{m!}{(m-n)! \, n!}$ if $m \geq n$, and $\binom{m}{n} = 0$ if $m < n$ (see e.g. \cite[Theorem 22.1]{shu}).
Set
$$
    M_k : = \binom{d+k-1}{d-1} + \binom{d+k-2}{d-1}, \quad k \in \Z.
    $$
    Evidently,
    \bel{10}
    M_k = \frac{2k^{d-1}}{(d-1)!} \left(1 + O\left(k^{-1}\right)\right), \quad k \to \infty,
    \ee
    (see e.g. \cite[Eq. 6.1.47]{abst}).
By induction, we easily find that
    \bel{26}
\sum_{j=0}^k m_j = M_k, \quad k \in \Z_+.
    \ee
    Let $\psi_{k,\ell}$, $\ell =1,\ldots,m_k$, be an orthonormal basis in ${\rm Ker}\,\left(-\Delta_{{\mathbb S}^{d-1}}- k(k+d-2) I\right)$, $k \in {\mathbb Z}_+$. It is well known that $\psi_{k,\ell}$ are restrictions on ${\mathbb S}^{d-1}$ of homogeneous polynomials of degree  $k$, harmonic in $\re^d$ (see e.g \cite[Section 22]{shu}). Then the functions $\phi_{k,\ell}(x) : = \sqrt{2k+d} \, |x|^k \psi_{k,\ell}(x/|x|)$,
$x \in B_1$, $\ell = 1,\ldots,m_k$, $k \in {\mathbb Z}_+$, form an orthonormal basis in ${\mathcal H}(B_1)$. Let  ${\mathcal H}_k(B_1)$, $k \in {\mathbb Z}_+$, be the subspace of  ${\mathcal H}(B_1)$ generated by  $\phi_{k,\ell}$, $\ell = 1,\ldots,m_k$.  \\
Further, let $V(x) = v(|x|)$, $x \in B_1$, and let $v : [0,1) \to \re$ satisfy $\lim_{r \uparrow 1} v(r) = 0$, $v \in L^1((0,1); r^{d-1}dr)$.
Then the operator $T_V$ is self-adjoint and compact in ${\mathcal H}(B_1)$, and
\bel{11}
   T_V u = \mu_k u, \quad u \in {\mathcal H}_k(B_1),
   \ee
   where
   \bel{12}
   \mu_k(v) : = (2k+d) \int_0^1 v(r) r^{2k+d-1} dr, \quad k \in {\mathbb Z}_+.
   \ee
   Set
   $$
   \nu_\pm(s; v) = \#\left\{k \in \Z_+ \, | \, \mu_k(\pm v) > s\right\}, \quad s>0.
   $$

   Let us calculate  the eigenvalues of $T_V$ in a simple model situation where, in particular, $v \geq 0$ so that $T_V \geq 0$. More precisely, let $v(r) = b \,\one_{[0,c]}(r)$, $r \in [0,1)$, with $b>0$, and $c \in (0,1)$. Then \eqref{12} implies
   \bel{14}
   \mu_k(v) = b \, c^{2k+d}, \quad k \in \Z_+.
   \ee
   Evidently, the sequence $\left\{\mu_k(v)\right\}_{k \in \Z_+}$ is decreasing. Setting $V(x) : = v(|x|)$, $x \in \re^d$, we get
   \bel{31}
   n_+(\lambda; T_{V})  = M_{\nu_+(\lambda; v)-1}, \quad \lambda > 0.
   \ee
   Let us discuss the asymptotics of $ n_+(\lambda; T_{V})$ as $\lambda \downarrow 0$. By \eqref{14},
   \bel{33}
   \nu_+(\lambda; v) = \frac{1}{2} \frac{|\ln{\lambda}|}{|\ln{c}|} + O(1), \quad \lambda \downarrow 0.
   \ee
   By \eqref{31}, \eqref{10}, and \eqref{33}, we get
   \bel{15}
   n_+(\lambda; T_{V})  = \frac{2^{-d+2}}{(d-1)!|\ln{c}|^{d-1}} |\ln{\lambda}|^{d-1} + O\left(|\ln{\lambda}|^{-d+2}\right),  \quad \lambda \downarrow 0.
   \ee
   %which implies
   %\bel{17}
   %\ln{\varkappa_j^+(T_{V_2})} = 2 \ln{c} \, ((d-1)!/2)^{1/(d-1)} j^{1/(d-1)}(1+o(1)), \quad j \to \infty.
   %\ee

   {\em Remark}: The fact that the basis $\left\{\phi_{k,\ell}\right\}$ diagonalizes the operator $T_V$ with radially symmetric symbol $V$, acting in ${\mathcal H}(B_1)$, was noted in \cite[Part 2.3.2]{roz}, and was used there, in particular, to obtain asymptotic relations of type \eqref{15}. The fact that the Toeplitz operators with radially symmetric symbols, acting in the holomorphic {\em Fock-Segal-Bargmann} space, are diagonalized in a certain canonic basis, was utilized already in \cite{rw, gv}. A similar result concerning Toeplitz operators with radially symmetric symbols, acting in the holomorphic {\em Bergman} space, can be found in \cite{gkv}. \\

   Next, we use \eqref{15} to study the spectral asymptotics for Toeplitz operators with symbols $V$ which are compactly supported in $\Omega$, and possess partial radial symmetry.

   \begin{pr} \label{p6} Let $\Omega = B_1$. Assume that $V : B_1 \to [0,\infty)$ satisfies $V \in L^\infty(B_1)$ and ${\rm supp}\,V = \overline{B_c}$ for some $c \in (0,1)$. Suppose moreover that for any $\delta \in (0,c)$ we have ${\rm ess}\,{\rm inf}_{x \in B_\delta} V(x) > 0$. Then
   \bel{21}
   \lim_{{\lambda} \downarrow 0} |\ln{\lambda}|^{-d+1} \, n_+(\lambda; T_{V}) = \frac{2^{-d+2}}{(d-1)!|\ln{c}|^{d-1}}.
   \ee
   \end{pr}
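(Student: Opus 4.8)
The plan is to squeeze $T_V$ between two Toeplitz operators with radially symmetric symbols of the form $b\,\one_{[0,c']}(|x|)$ and then invoke \eqref{15}. The key observation is that the monotonicity of $V \mapsto T_V$ (in the sense of quadratic forms, since $P \geq 0$) allows us to bound $n_+(\lambda; T_V)$ both from above and from below by the explicitly computed counting functions associated with radial step symbols.

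\emph{Upper bound.} Since $V \in L^\infty(B_1)$ and $\supp V = \overline{B_c}$, we have $V(x) \leq \|V\|_{L^\infty} \,\one_{[0,c]}(|x|) =: V_+(x)$ a.e. Hence $T_V \leq T_{V_+}$ in the form sense on $\harmo$, which gives $n_+(\lambda; T_V) \leq n_+(\lambda; T_{V_+})$ for all $\lambda > 0$. Applying \eqref{15} with $b = \|V\|_{L^\infty}$ and the same $c$ (the constant $b$ only shifts $\nu_+$ by $O(1)$, hence does not affect the main term in \eqref{15}), we get
\bel{ub}
\limsup_{\lambda \downarrow 0} |\ln\lambda|^{-d+1}\, n_+(\lambda; T_V) \leq \frac{2^{-d+2}}{(d-1)!\,|\ln c|^{d-1}}.
\ee

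\emph{Lower bound.} Fix $\delta \in (0,c)$ and set $b_\delta := \operatorname{ess\,inf}_{x \in B_\delta} V(x) > 0$ by hypothesis. Then $V(x) \geq b_\delta\, \one_{[0,\delta]}(|x|) =: V_{-,\delta}(x)$ a.e., so $T_V \geq T_{V_{-,\delta}}$ and therefore $n_+(\lambda; T_V) \geq n_+(\lambda; T_{V_{-,\delta}})$. Again by \eqref{15}, now with $c$ replaced by $\delta$,
$$
\liminf_{\lambda \downarrow 0} |\ln\lambda|^{-d+1}\, n_+(\lambda; T_V) \geq \frac{2^{-d+2}}{(d-1)!\,|\ln \delta|^{d-1}}.
$$
Since $\delta \in (0,c)$ is arbitrary, we may let $\delta \uparrow c$; because $|\ln\delta|^{-(d-1)} \to |\ln c|^{-(d-1)}$ as $\delta \uparrow c$, this yields
$$
\liminf_{\lambda \downarrow 0} |\ln\lambda|^{-d+1}\, n_+(\lambda; T_V) \geq \frac{2^{-d+2}}{(d-1)!\,|\ln c|^{d-1}}.
$$
Combining this with \eqref{ub} proves \eqref{21}.

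\emph{Main obstacle.} The substantive point is the monotonicity step $V_1 \leq V_2 \Rightarrow n_+(\lambda; T_{V_1}) \leq n_+(\lambda; T_{V_2})$: one must check that for $0 \leq V_1 \leq V_2$ the operator inequality $T_{V_1} \leq T_{V_2}$ holds in $\harmo$, which follows from $\langle T_{V_j} u, u\rangle = \int_\Omega V_j |u|^2\,dx$ for $u \in \harmo$ together with the min-max principle. This is routine but is the only place where positivity of $V$ is genuinely used; everything else is a direct appeal to the model computation \eqref{15} and a limiting argument in the inner/outer radius. One should also note that \eqref{15} was derived for the specific symbol $v = b\,\one_{[0,c]}$, and that replacing $b = 1$ by an arbitrary $b > 0$ changes $\nu_\pm$ in \eqref{33} only by an additive $O(1)$, hence leaves \eqref{15} unchanged; this small remark makes the upper bound argument with $b = \|V\|_{L^\infty}$ legitimate.
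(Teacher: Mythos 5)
Your proposal is correct and is essentially identical to the paper's own proof: the paper likewise squeezes $V$ between $b_-\one_{B_\delta}$ and $b_+\one_{B_c}$, invokes the mini-max principle together with \eqref{15}, and lets $\delta \uparrow c$. Your added remark that the constant $b$ only shifts $\nu_+$ by $O(1)$ and hence does not change the leading term in \eqref{15} is a correct and worthwhile clarification of a point the paper leaves implicit.
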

   \begin{proof}
   Pick $\delta \in (0,c)$. Then for almost every $x \in B_1$ we have
   $$
   b_- \one_{B_\delta}(x) \leq V(x)  \leq b_+ \one_{B_c}(x),
   $$
   where
   $$
   b_- : = {\rm ess}\,{\rm inf}_{x \in B_\delta} V(x), \quad b_+ : = {\rm ess}\,{\rm sup}_{x \in B_1} V(x).
   $$
   Then the mini-max principle and \eqref{15}  imply
   $$
    \frac{2^{-d+2}}{(d-1)!|\ln{\delta}|^{d-1}} \leq
    $$
    $$
    \liminf_{{\lambda} \downarrow 0} |\ln{\lambda}|^{-d+1} \, n_+(\lambda; T_{V}) \leq
   \limsup_{{\lambda} \downarrow 0} |\ln{\lambda}|^{-d+1} \, n_+(\lambda; T_{V}) \leq
   $$
   $$
   \frac{2^{-d+2}}{(d-1)!|\ln{c}|^{d-1}}.
   $$
   Letting $\delta \uparrow c$, we obtain \eqref{21}.
   \end{proof}

{\em Remarks}: (i) We do not estimate the remainder in \eqref{21} due to the fairly general assumptions concerning the behaviour of $V$ on ${\rm supp}\,V$.\\
(ii) Evidently, Proposition \ref{p6} could be easily extended to more general radially symmetric supports of $V$ which may contain, say, spherical layers and a ball. Further, if $d=2$, the proposition could be extended to non radially symmetric $\Omega$ and ${\rm supp}\, V$, applying appropriate conformal mappings. Possibly, such an approach based on complex-analytic methods, may also work in  arbitrary {\em even} dimensions $d$. We omit these extensions, with the hope that we will be able to develop a general method to extend Proposition \ref{p6} which would work in any, even or odd, dimension $d$. \\
(ii) Let $v(r) = a(1-r)^{\gamma}$, $r \in [0,1)$, with $a>0$ and $\gamma > 0$. Then, by \eqref{12}, we have
   \bel{29}
   \mu_k(v) = a (2k+d) {\rm B}(\gamma+1,2k+d) = a \Gamma(\gamma + 1) \frac{\Gamma(2k+d+1)}{\Gamma(2k+d+1+\gamma)}, \quad k \in \Z_+,
   \ee
   where ${\rm B}$ is the Euler beta functions.
   It is easy to show that the sequence $\left\{\mu_k(v)\right\}_{k \in \Z_+}$ is again decreasing.
   Setting as above $V(x) : = v(|x|)$, $x \in \re^d$, we find that  \eqref{29} implies
    \bel{16}
     n_+(\lambda; T_{V}) = \frac{2^{-d+2}}{(d-1)!} \left(a\Gamma(\gamma+1)\right)^{(d-1)/\gamma}\lambda^{-(d-1)/\gamma}(1 + o(1)), \quad \lambda \downarrow 0.
      \ee
   Thus, if we assume that
 $0 \leq V \in L^1(B_1)$, and there exist $\gamma > 0$ and $a > 0$ such that $\lim_{|x| \uparrow 1} (1-|x|)^{-\gamma} V(x) = a$, uniformly with respect to $x/|x| \in {\mathbb S}^{d-1}$,  we have
   \bel{18}
   \lim_{\lambda \downarrow 0} \lambda^{(d-1)/\gamma} \, n_+(\lambda; T_{V}) = \frac{2^{-d+2}}{(d-1)!} \left(a\Gamma(\gamma+1)\right)^{(d-1)/\gamma}.
   \ee
   We omit the simple proof of \eqref{18}, based on \eqref{16}, \eqref{9a}, and standard variational techniques, since up to regularity issues and absence of a remainder estimate, asymptotic relation \eqref{18} is a special case of \eqref{a2}.

%%%%%%%%%%%%%%%%%%%%%%%%%%%%%%%%%%%%%%%%%%
%%%%%%%%%%%%%%%%%%%%%%%%%%%%%%%%%%%%%%%%%%
\section{Applications to the spectral theory of the perturbed Krein Laplacian}\label{s3}
%%%%%%%%%%%%%%%%%%%%%%%%%%%%%%%%%%%%%%%%%%
%%%%%%%%%%%%%%%%%%%%%%%%%%%%%%%%%%%%%%%%%%
In this section we introduce the Krein Laplacian $K$, perturb it by a multiplier $V \in C(\overline{\Omega};\re)$, and investigate the spectral properties of the perturbed operator $K + V$.\\

For $ s\in \re$, we denote, as usual, by $H_0^s(\Omega)$ the closure of $C_0^\infty(\Omega)$ in the topology of the Sobolev space $H^s(\Omega)$. Set also $H_D^2(\Omega) : = H^2(\Omega) \cap H^1_0(\Omega)$. Define the minimal Laplacian
$$
\Delta_{\rm min} : = \Delta, \quad {\rm Dom}\,\Delta_{\rm min} = H_0^2(\Omega).
$$
As is well known, $\Delta_{\rm min}$ is symmetric but not self-adjoint in $L^2(\Omega)$, since we have
    \bel{s20}
    \Delta^*_{\rm min}  = :\Delta_{\rm max} = \Delta, \quad {\rm Dom}\,\Delta_{\rm max}  = \left\{u \in L^2(\Omega) \, | \, \Delta\,u \in L^2(\Omega)\right\},
    \ee
    $\Delta u$ being the distributional Laplacian of $u \in L^2(\Omega)$. Note that we have
    $$
    {\rm Ker}\,\Delta_{\rm max} = \harmo.
    $$
    \begin{lemma} \label{l3}
     The domain ${\rm Dom}\,\Delta_{\rm max} $ admits the direct-sum decomposition
    \bel{s23}
    {\rm Dom}\,\Delta_{\rm max}  = \harmo \dotplus H_D^2(\Omega).
    \ee
    \end{lemma}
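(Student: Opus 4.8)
The plan is to verify that $\harmo$ and $H_D^2(\Omega)$ intersect trivially and that together they span $\operatorname{Dom}\Delta_{\max}$. First I would check the intersection is $\{0\}$: if $u \in \harmo \cap H_D^2(\Omega)$, then $u$ is harmonic in $\Omega$ and $u \in H^1_0(\Omega)$, so $u_{|\partial\Omega} = 0$ in the trace sense; by uniqueness for the Dirichlet problem \eqref{a3} (with $s = 1/2$, say, or directly by integration by parts, $\int_\Omega |\nabla u|^2 = 0$), we get $u = 0$. Hence the sum in \eqref{s23} is direct.

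For the spanning part, I would take an arbitrary $u \in \operatorname{Dom}\Delta_{\max}$, so $f := \Delta u \in L^2(\Omega)$, and split off the component that absorbs $f$. Solve the inhomogeneous Dirichlet problem $\Delta w = f$ in $\Omega$, $w = 0$ on $\partial\Omega$: by elliptic regularity for the Dirichlet Laplacian on a smooth bounded domain (see \cite{lm}), there is a unique $w \in H_D^2(\Omega) = H^2(\Omega) \cap H^1_0(\Omega)$ with $\Delta w = f$. Then $h := u - w$ satisfies $\Delta h = \Delta u - f = 0$ in the distributional sense and $h \in L^2(\Omega)$, so $h \in \harmo$. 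This gives $u = h + w$ with $h \in \harmo$ and $w \in H_D^2(\Omega)$, proving \eqref{s23}.

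The only genuine input here is the solvability and $H^2$-regularity of the Dirichlet problem with $L^2$ right-hand side, which is classical for $\partial\Omega \in C^\infty$ and which the paper already invokes in the form \eqref{a3}--\eqref{a4}; everything else is a one-line integration-by-parts argument. I expect no real obstacle — the main point to be careful about is that the decomposition is a topological (not merely algebraic) direct sum, i.e. that the projections onto $\harmo$ and $H_D^2(\Omega)$ are bounded in the graph norm of $\Delta_{\max}$; this follows from the estimate \eqref{a4} (with $s=3/2$) controlling $\|w\|_{H^2(\Omega)}$ by $\|f\|_{L^2(\Omega)} = \|\Delta u\|_{L^2(\Omega)}$, together with $\|h\|_{L^2} \le \|u\|_{L^2} + \|w\|_{L^2}$, so that the decomposition depends continuously on $u$ in the graph norm.
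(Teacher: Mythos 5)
Your argument is correct and is essentially the paper's own proof: the paper also establishes directness by noting that a harmonic function in $H_D^2(\Omega)$ solves the homogeneous Dirichlet problem and hence vanishes, and obtains the splitting by setting $u_2 := \Delta_D^{-1}\Delta u$ (your $w$) and $u_1 := u - u_2$ (your $h$). Your closing remark on the topological nature of the decomposition via the $H^2$ a priori estimate is a harmless addition beyond what the lemma asserts.
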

    \begin{proof}
    Let us first show that the sum at the r.h.s. of \eqref{s23} is direct. Assume that $u_1 \in \harmo$, $u_2 \in H_D^2(\Omega)$, and $u_1 + u_2 = 0$. Then $u_2 \in H^2(\Omega)$ satisfies the homogeneous boundary-value problem
    $$
    \left\{
    \begin{array} {l}
    \Delta u_2 = 0 \quad \rm{in} \quad \Omega,\\
    u_2 = 0 \quad \rm{on} \quad \partial \Omega.
    \end{array}
    \right.
    $$
    Hence, $u_2 = 0$, and $u_1 = 0$. Evidently, if $u_1 \in \harmo$, $u_2 \in H_D^2(\Omega)$, then $u_1 + u_2 \in {\rm Dom}\,\Delta_{\rm max} $.
    Pick now $u \in {\rm Dom}\,\Delta_{\rm max} $, and let us check the existence of $u_1$ and $u_2$ such that
    \bel{s24}
    u_1 \in \harmo, \quad u_2 \in H_D^2(\Omega), \quad u = u_1 + u_2.
    \ee
    Define the Dirichlet Laplacian
    $$
    \Delta_D : = \Delta, \quad {\rm Dom}\,\Delta_{D}  : = H_D^2(\Omega).
    $$
    Set
    $$
    u_2 : = \Delta_D^{-1} \Delta u, \quad u_1 : = u - u_2.
    $$
    Evidently, $u_1$ and $u_2$ satisfy \eqref{s24}.
    \end{proof}
    Introduce the  Krein Laplacian
    $$
    K : = - \Delta, \quad {\rm Dom}\,K  = \harmo \dotplus H_0^2(\Omega).
    $$
    The operator $K \geq 0$, self-adjoint in $L^2(\Omega)$, is the von Neumann - Krein ``soft" extension of $-\Delta_{\rm min}$, remarkable for the fact that any other self-adjoint extension
    $S \geq 0$ of  $-\Delta_{\rm min}$ satisfies $$(S+ I)^{-1} \leq (K+I)^{-1}$$ (see \cite{jvn, k1}).  Evidently, ${\rm Ker}\,K = \harmo$. The domain ${\rm Dom}\,K$ admits a more explicit description in the terms of the Dirichlet-to-Neumann operator ${\mathcal D}$. For $f \in C^\infty(\partial \Omega)$, ${\mathcal D}\,f$ is defined by
    $$
    {\mathcal D}\,f = \frac{\partial u}{\partial \nu}_{|\partial \Omega},
    $$
    where $u$ is the solution of the boundary-value problem
    $$
    \left\{
    \begin{array} {l}
    \Delta u = 0 \quad {\rm in} \quad \Omega, \\
    u = f \quad {\rm on} \quad \partial \Omega.
    \end{array}
    \right.
    $$
    The operator ${\mathcal D}$ is a first-order elliptic operator; by the elliptic regularity, it extends to a bounded operator form $H^s(\partial \Omega)$ into $H^{s-1}(\partial \Omega)$, $s \in \re$.
    Then we have
    $$
    {\rm Dom}\,K  = \left\{u \in {\rm Dom}\,\Delta_{\rm max}  \, \left| \, \frac{\partial u}{\partial \nu}_{|\partial \Omega} =
    {\mathcal D}\left(u_{|\partial \Omega}\right)\right.\right\}
    $$
    (see \cite[Theorem III.1.2]{g1}). The Krein Laplacian $K$ arises naturally in the so called {\em abstract buckling problem} (see e.g. \cite{g2, agmst}). \\

     Denote by $L$ the restriction of $K$ onto ${\rm Dom}\,K  \cap \harmo^\perp$ where $\harmo^\perp : = L^2(\Omega) \ominus \harmo$.
    Then, $L$ is self-adjoint in the Hilbert space $\harmo^\perp$.

    \begin{pr} \label{p10} {\rm \cite{k1}, \cite[Theorem 5.1]{as}} The spectrum of $L$ is purely discrete and positive, and, hence, $L^{-1} \in S_\infty(\harmo^\perp)$. As a consequence,   $\sigma_{\rm ess}(K) = \{0\}$,  and the zero is an isolated eigenvalue of $K$ of infinite multiplicity.
    \end{pr}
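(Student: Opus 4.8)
The plan is to exploit the orthogonal decomposition $L^2(\Omega) = \harmo \oplus \harmo^\perp$, with respect to which $K$ reduces to the direct sum of the zero operator on $\harmo$ and $L$ on $\harmo^\perp$ (as already recorded above, $L = L^*$ in $\harmo^\perp$). It then suffices to establish two properties of $L$: a uniform lower bound $L \geq \lambda_1 I$, where $\lambda_1 > 0$ is the first eigenvalue of the Dirichlet Laplacian $-\Delta_D$, and the compactness of $L^{-1}$.

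First I would prove the lower bound. Let $u \in {\rm Dom}\,L = {\rm Dom}\,K \cap \harmo^\perp$, and use the direct-sum structure ${\rm Dom}\,K = \harmo \dotplus H_0^2(\Omega)$ to write $u = h + w$ with $h \in \harmo$ and $w \in H_0^2(\Omega)$; since $u \perp \harmo$ and $Ph = h$, this forces $h = -Pw$ and $u = (I-P)w$. As $\Delta h = 0$, we have $Lu = -\Delta u = -\Delta w \in L^2(\Omega)$, and this vector is orthogonal to $\harmo$ (indeed $\langle -\Delta w, h'\rangle = 0$ for every $h' \in \harmo$, by density of $C_0^\infty(\Omega)$ in $H_0^2(\Omega)$ and $\Delta h' = 0$). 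Hence $\langle Lu, u\rangle = \langle -\Delta w, w\rangle = \|\nabla w\|_{L^2(\Omega)}^2$ after an integration by parts legitimate because $w \in H_0^1(\Omega)$. The Poincaré inequality $\|\nabla w\|^2 \geq \lambda_1 \|w\|^2$ together with $\|u\| = \|(I-P)w\| \leq \|w\|$ gives $\langle Lu, u\rangle \geq \lambda_1 \|u\|^2$, so $\sigma(L) \subset [\lambda_1, \infty)$; in particular $0 \notin \sigma(L)$ and $L^{-1} \in {\mathcal L}(\harmo^\perp)$ with $\|L^{-1}\| \leq \lambda_1^{-1}$.

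Next I would identify $L^{-1}$. Reversing the computation shows that $w \mapsto (I-P)w$ is a bijection from $H_0^2(\Omega)$ onto ${\rm Dom}\,L$ with $L\big((I-P)w\big) = -\Delta w$; hence ${\rm Ran}\,L = \Delta\big(H_0^2(\Omega)\big) = {\rm Ran}\,\Delta_{\rm min}$, which is dense in $\harmo^\perp = ({\rm Ker}\,\Delta_{\rm max})^\perp$ by \eqref{s20}. For $f = -\Delta w$ with $w \in H_0^2(\Omega)$ one has $w = (-\Delta_D)^{-1}f$ by uniqueness of the Dirichlet solution, so $L^{-1}f = (I-P)(-\Delta_D)^{-1}f$ on this dense subspace; as both sides are bounded operators on $\harmo^\perp$, they coincide everywhere. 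Since $(-\Delta_D)^{-1} : L^2(\Omega) \to H^2(\Omega)$ is bounded and the embedding $H^2(\Omega) \hookrightarrow L^2(\Omega)$ is compact, the operator $L^{-1} = (I-P)(-\Delta_D)^{-1}|_{\harmo^\perp}$ is compact, i.e. $L^{-1} \in S_\infty(\harmo^\perp)$. Consequently $\sigma(L)$ consists of finite-multiplicity eigenvalues accumulating only at $+\infty$, all $\geq \lambda_1$. Finally, since $\dim \harmo = \infty$ and $K$ is the orthogonal sum of the zero operator on $\harmo$ and $L$ on $\harmo^\perp$, we obtain $\sigma_{\rm ess}(K) = \{0\} \cup \sigma_{\rm ess}(L) = \{0\}$, while $0$ is an eigenvalue of $K$ with eigenspace $\harmo$, hence of infinite multiplicity, and it is isolated because ${\rm dist}(0, \sigma(L)) \geq \lambda_1 > 0$.

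The step requiring most care is the middle one: one must keep track of the fact that the decomposition $u = h + w$ is the one furnished by ${\rm Dom}\,K$ (so that $w \in H_0^2(\Omega)$, not merely $w \in H_D^2(\Omega)$), justify the integration by parts and the orthogonality $-\Delta w \perp \harmo$, and — for the identification of $L^{-1}$ — argue on the dense subspace $\Delta\big(H_0^2(\Omega)\big)$ rather than on all of $\harmo^\perp$, so as to stay inside $H_0^2(\Omega)$ where $w$ is a genuine domain component. The remaining ingredients (spectral theory of self-adjoint operators, Rellich compactness, the Poincaré inequality) are routine. I would also record the classical reformulation that the positive eigenvalues of $K$ are exactly those of the buckling problem $\Delta^2 w = \mu(-\Delta w)$ in $\Omega$, $w \in H_0^2(\Omega)$, which gives an alternative variational route to discreteness via the compact embedding $H_0^2(\Omega) \hookrightarrow H_0^1(\Omega)$.
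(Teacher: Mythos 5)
Your argument is correct. Note that the paper does not prove Proposition \ref{p10} at all: it simply cites Krein \cite{k1} and Alonso--Simon \cite[Theorem 5.1]{as}, i.e.\ the abstract Birman--Krein--Vishik theory of nonnegative self-adjoint extensions. What you supply instead is a direct, self-contained PDE argument: the parametrization $u=(I-P)w$, $w\in H_0^2(\Omega)$, of ${\rm Dom}\,L$, the identity $\langle Lu,u\rangle=\|\nabla w\|^2$, the resulting bound $L\geq \lambda_1 I$ with $\lambda_1$ the first Dirichlet eigenvalue, and the factorization $L^{-1}=(I-P)(-\Delta_D)^{-1}|_{\harmo^\perp}$, whence compactness by Rellich. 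Each step checks out: the orthogonality $-\Delta w\perp\harmo$ for $w\in H_0^2(\Omega)$ is exactly ${\rm Ran}\,\Delta_{\rm min}\perp{\rm Ker}\,\Delta_{\rm max}$; the density of ${\rm Ran}\,L={\rm Ran}\,\Delta_{\rm min}$ in $\harmo^\perp$ follows from \eqref{s20}; the injectivity of $w\mapsto(I-P)w$ rests on the directness of the sum in Lemma \ref{l3} restricted to $H_0^2(\Omega)\subset H_D^2(\Omega)$; and the reduction $K=0\oplus L$ relative to $L^2(\Omega)=\harmo\oplus\harmo^\perp$ is legitimate because $I-P$ maps ${\rm Dom}\,K$ into itself. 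Your route yields more than the citation does --- an explicit positive lower bound for $\inf\sigma(L)$ and a formula for $L^{-1}$ --- at the modest cost of invoking $H^2$ elliptic regularity for $(-\Delta_D)^{-1}$, which is available since $\partial\Omega\in C^\infty$ (and could even be bypassed, for compactness alone, via $(-\Delta_D)^{-1}:L^2(\Omega)\to H_0^1(\Omega)$). The only cosmetic issue is that your symbol $\lambda_1$ collides with the different $\lambda_1$ introduced in Theorem \ref{t3}; rename it if this text is to sit alongside Section 5.
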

Let $V \in C(\overline{\Omega}; \re)$. Then the operator $K+V$ with domain ${\rm Dom}\,K$ is self-adjoint in $L^2(\Omega)$. In the sequel, we will investigate the spectral properties of $K+V$.\\

    {\em Remarks}:  (i) In many aspects, the assumption $V \in C(\overline{\Omega})$ is too restrictive, the operator $K + V$ could also be self-adjoint on ${\rm Dom}\,K$  for less regular potentials $V$. Moreover, the sum $K + V$ could be defined in the sense of quadratic forms. However, the description of an optimal class of singular $V$ for which the sum $K + V$ is well defined in the operator or form sense requires additional technical work which is left for a possible future article.\\
    (ii) It should be underlined here that the perturbations $K_V$ of the Krein Laplacian $K$ discussed in \cite{agmt} are of different nature than the perturbations $K + V$ considered here. Namely, the authors of \cite{agmt} assume that $V \geq 0$, define the maximal operator $K_{V, \rm{max}}$ as
    $$
    K_{V, \rm{max}} : = - \Delta + V, \quad {\rm Dom}\, K_{V, \rm{max}} : = {\rm Dom}\,\Delta_{\rm max} ,
    $$
    and set
    $$
    K_V : = - \Delta + V, \quad {\rm Dom}\,K_V : = {\rm Ker}\,K_{V, \rm{max}} \dotplus H^2_0(\Omega).
    $$
    Thus, if $V \neq 0$, then the operators $K_V$ and $K_0 = K$ are self-adjoint on different domains, while the operators $K + V$ introduced here are  self-adjoint on the same domain ${\rm Dom}\, K$. It is shown in \cite{agmt} that for any $0 \leq V \in L^\infty(\Omega)$ we have $K_V \geq 0$, $\sigma_{\rm ess}(K_V) = \{0\}$,  and the zero is an isolated eigenvalue of $K_V$ of infinite multiplicity. As we will see in what follows, the spectral properties of $K + V$ could be quite different.

    \begin{theorem} \label{t2}
    Let $V \in C(\overline{\Omega}; \re)$. Then we have
    \bel{s23a}
    \sigma_{\rm ess}(K+V) = V(\partial \Omega).
    \ee
    In particular, $ \sigma_{\rm ess}(K+V) = \{0\}$ if and only if $V_{|\partial \Omega} = 0$.
    \end{theorem}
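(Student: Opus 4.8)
The plan is to reduce the computation of $\sigma_{\rm ess}(K+V)$ to that of $\sigma_{\rm ess}(T_V)$, which is already known from Proposition~\ref{p1}(i) to be $V(\partial\Omega)$. The key structural fact is the decomposition $\harmo^\perp = L^2(\Omega) \ominus \harmo$ together with $K = 0 \oplus L$ relative to $L^2(\Omega) = \harmo \oplus \harmo^\perp$, where $L \geq c_0 > 0$ has compact resolvent (Proposition~\ref{p10}). Writing $Q : = I - P$ for the orthogonal projection onto $\harmo^\perp$, I would decompose $V$ in block form with respect to $P$ and $Q$:
\[
V = PVP + PVQ + QVP + QVQ .
\]
First I would argue that the blocks involving $Q$ contribute nothing to the essential spectrum. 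Indeed, $K + V$ differs from the block-diagonal operator $\widetilde{K} : = (PVP) \oplus (L + QVQ)$ only by the bounded off-diagonal term $PVQ + QVP$; to handle this I would instead work with a Schur-type (Feshbach) reduction, or more simply observe that since $L$ has compact resolvent, the operator $L + QVQ$ (self-adjoint and bounded below on $\harmo^\perp$) also has compact resolvent, hence empty essential spectrum, so $\sigma_{\rm ess}(\widetilde K) = \sigma_{\rm ess}(PVP\!\restriction_{\harmo}) = \sigma_{\rm ess}(T_V) = V(\partial\Omega)$.

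The main obstacle is that the off-diagonal coupling $PVQ + QVP$ is \emph{not} compact in general (it is merely bounded, since $V$ is only continuous up to the boundary), so one cannot simply invoke stability of $\sigma_{\rm ess}$ under compact perturbations. To circumvent this I would use the Feshbach/Grushin reduction on the resolvent: for $z \notin V(\partial\Omega)$ with $\mathrm{Im}\,z \neq 0$, the invertibility of $K + V - z$ is equivalent (via the Schur complement with respect to the $Q$-block, which is invertible because $L + QVQ - z$ is a bounded-below self-adjoint operator with compact resolvent plus a bounded perturbation, hence boundedly invertible away from its discrete spectrum) to the invertibility on $\harmo$ of the effective operator
\[
T_V - z - PVQ\,(L + QVQ - z)^{-1}\,QVP .
\]
The correction term $PVQ(L+QVQ-z)^{-1}QVP$ is compact on $\harmo$: it factors through $(L+QVQ-z)^{-1}$, and one shows compactness by noting that $VQ(L+QVQ-z)^{-1}$ maps $L^2(\Omega)$ into $H^2$-type regularity controlled by $L$ — more precisely $Q(L-z)^{-1}$ gains regularity (since $\mathrm{Dom}\,L \subset H^2_0(\Omega) \oplus \dots$, compactly embedded), and multiplication by the bounded $V$ preserves this, so the product is compact by Rellich. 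Thus the effective operator is a compact perturbation of $T_V - z$, and standard stability of the essential spectrum (or of the Fredholm property) gives that $K + V - z$ is Fredholm of index zero precisely when $T_V - z$ is, i.e. precisely for $z \notin V(\partial\Omega)$.

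Putting these together, $\sigma_{\rm ess}(K+V) = \sigma_{\rm ess}(T_V) = V(\partial\Omega)$ by Proposition~\ref{p1}(i), which establishes \eqref{s23a}; the final sentence then follows since $V(\partial\Omega) = \{0\}$ exactly when $V$ vanishes identically on $\partial\Omega$. I expect the delicate point to be the compactness of the Feshbach correction term: one must use the elliptic regularity of the Dirichlet-type realization underlying $L$ (Lemma~\ref{l3} and the description of $\mathrm{Dom}\,K$) rather than any smoothness of $V$, and verify that the boundary term in $\mathrm{Dom}\,K$ involving the Dirichlet-to-Neumann operator does not spoil the gain of regularity. An alternative, possibly cleaner route would be to compare resolvents directly: show $(K+V-z)^{-1} - (K-z)^{-1} - \big((T_V\oplus 0) \text{-type term}\big)$ is compact by the same regularity argument, but the Feshbach formulation makes the role of $T_V$ as the effective Hamiltonian most transparent and dovetails with Theorem~\ref{t3} to follow.
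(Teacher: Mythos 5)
Your route is viable and genuinely different from the paper's. The paper does not use a Schur complement at all: it compares resolvents directly, writing $(K+V-i)^{-1}-(K+PVP-i)^{-1}$ via the second resolvent identity and observing that every term of $V-PVP=QVQ+PVQ+QVP$ carries a factor $Q$ adjacent to $(K-i)^{-1}$; since $K=0\oplus L$ with respect to $L^2(\Omega)=\harmo\oplus\harmo^\perp$ and $L$ has compact resolvent (Proposition \ref{p10}), $Q(K-i)^{-1}$ and $(K-i)^{-1}Q$ are compact, so the resolvent difference is compact and $\sigma_{\rm ess}(K+V)=\sigma_{\rm ess}(K+PVP)=\sigma_{\rm ess}(T_V\oplus L)=V(\partial\Omega)$ by Proposition \ref{p1}(i). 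This is essentially the ``alternative, cleaner route'' you mention in your last sentence, and it is indeed cleaner: it never needs the off-diagonal coupling to be compact, only that it is always sandwiched next to a compact resolvent factor. Your Feshbach reduction buys a more transparent appearance of $T_V$ as an effective Hamiltonian (in the spirit of Theorem \ref{t3}), at the price of extra bookkeeping.

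Two soft spots in your write-up. First, the compactness of the correction term $PVQ(L+QVQ-z)^{-1}QVP$ is not delicate and needs no elliptic regularity or Rellich embedding: Proposition \ref{p10} already gives $(L-z)^{-1}\in S_\infty(\harmo^\perp)$, hence $(L+QVQ-z)^{-1}\in S_\infty$ by the resolvent identity (bounded perturbations preserve compact resolvent), and bounded times compact times bounded is compact. Invoking the $H^2$-type domain of $L$ here is a detour, and your worry about the Dirichlet-to-Neumann boundary condition spoiling the regularity gain is moot. Second, your reduction is only stated where the $Q$-block is invertible: the phrase ``for $z\notin V(\partial\Omega)$ with ${\rm Im}\,z\neq 0$'' is off target (for non-real $z$ self-adjointness already gives invertibility and says nothing about $\sigma_{\rm ess}$); what you need is the Fredholm criterion at \emph{real} $z\notin V(\partial\Omega)$, and there the Schur complement is available only when $z\notin\sigma(L+QVQ)$, a discrete set of eigenvalues of finite multiplicity. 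You must still exclude that such an exceptional $z$ lies in $\sigma_{\rm ess}(K+V)$; this is fixable in a line (e.g.\ shift the offending eigenvalue by a finite-rank perturbation, which changes neither Fredholmness of $K+V-z$ nor the compactness argument), but as written it is a gap. With these repairs your argument gives the same conclusion as the paper's, with both proofs resting on the same two inputs: Proposition \ref{p1}(i) and the compact resolvent of $L$.
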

    \begin{proof}
    First, we will show that
    \bel{s21}
    (K+V-i)^{-1} - (K+PVP-i)^{-1} \in S_\infty(L^2(\Omega)).
    \ee
    Set $Q : = I - P$. Then
    $$
    (K+V-i)^{-1} - (K+PVP-i)^{-1} =
    $$
    \bel{s21a}
    - (K+V-i)^{-1} (K-i) (K-i)^{-1} (QVQ + PVQ + QVP) (K-i)^{-1} (K-i)(K+PVP-i)^{-1}.
    \ee
    Evidently,
    \bel{s21b}
     (K+V-i)^{-1} (K-i), \, (K-i)(K+PVP-i)^{-1}, \, P, \, V \in {\mathcal L}(L^2(\Omega)).
     \ee
     Moreover, using the orthogonal decomposition $L^2(\Omega) = \harmo \oplus \harmo^\perp$, and bearing in  mind Proposition \ref{p10}, we find that
     \bel{s22}
     Q(K-i)^{-1}, \, (K-i)^{-1}Q \in S_\infty(L^2(\Omega)).
     \ee
     Now \eqref{s21} follows from \eqref{s21a} -\eqref{s22}. Therefore,
     \bel{s24a}
     \sigma_{\rm ess}(K+V) = \sigma_{\rm ess}(K+PVP).
     \ee
     Further, we have $K + PVP = T_V \oplus L$ in  $L^2(\Omega) = \harmo \oplus \harmo^\perp$, and, hence,
     \bel{s25}
     \sigma_{\rm ess}(K+PVP) = \sigma_{\rm ess}(T_V) \cup \sigma_{\rm ess}(L).
     \ee
     By Proposition \ref{p1} (i), we have $\sigma_{\rm ess}(T_V) = V(\partial \Omega)$, and by Proposition \ref{p10},
     $\sigma_{\rm ess}(L) = \emptyset$. Thus, \eqref{s24a} and \eqref{s25} imply \eqref{s23a}.
    \end{proof}
    In the rest of the section we assume that $0 \leq V \in C(\overline{\Omega})$ with $V_{|\partial \Omega} = 0$, and investigate the asymptotic distribution of the discrete spectrum of the operators $K \pm V$, adjoining the origin. For $\lambda > 0$ set
    $$
    {\mathcal N}_-(\lambda) : = {\rm Tr}\,\one_{(-\infty, -\lambda)}(K-V).
    $$
    Set $\lambda_0 : = \inf \sigma(L)$. By Proposition \ref{p10}, we have $\lambda_0 > 0$. For $\lambda \in (0,\lambda_0)$ set
    $$
    {\mathcal N}_+(\lambda) : = {\rm Tr}\,\one_{(\lambda, \lambda_0)}(K+V).
    $$
    Define the compact operator
    $$
    R := {\rm u}-\lim_{\lambda \to 0} Q(K+\lambda)^{-1},
    $$
    where as above $Q = I -P$.
    \begin{theorem} \label{t3}
    Assume that  $0 \leq V \in C(\overline{\Omega})$ and $V_{|\partial \Omega} = 0$.\\
    (i) For any $\eps \in (0,1)$ and $\lambda >0$ we have
    \bel{s26}
    n_+(\lambda; T_V) \leq {\mathcal N}_-(\lambda) \leq n_+((1-\eps)\lambda; T_V) + n_+(\eps; V^{1/2} R V^{1/2}).
    \ee
    (ii) There exist constants $\lambda_1 \in (0,\lambda_0)$ and $C \in [0,\infty)$ such that for any $\eps>0$ and $\lambda \in (0,\lambda_1)$ we have
     \bel{s27}
    n_+((1+\eps)\lambda; T_V) - n_+(\eps; V^{1/2} Q(K-\lambda_1)^{-1} V^{1/2})  \leq {\mathcal N}_+(\lambda) + C \leq n_+(\lambda; T_V).
    \ee
    \end{theorem}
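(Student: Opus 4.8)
The plan is to reduce the eigenvalue asymptotics of $K\pm V$ near the origin to those of the harmonic Toeplitz operator $T_V$ via the Birman--Schwinger principle, exploiting the direct-sum structure $K+PVP = T_V\oplus L$ already used in the proof of Theorem~\ref{t2}. The key point is that, for small spectral parameter, the full perturbation $V$ can be replaced by its compression $PVP$ up to a controlled, subordinate error coming from $QVQ+PVQ+QVP$, where $Q=I-P$. Since $\sigma(L)$ is discrete and bounded away from $0$ (Proposition~\ref{p10}), the part of the spectrum of $K\pm V$ accumulating at $0$ is dictated entirely by the block $T_V$.

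First I would treat part (i), the negative eigenvalues of $K-V$. Since $K\ge 0$ with $\operatorname{Ker}K=\harmo$, a negative eigenvalue $-\lambda$ of $K-V$ is equivalent, by Birman--Schwinger, to $1$ being an eigenvalue of $V^{1/2}(K+\lambda)^{-1}V^{1/2}$, and $\mathcal N_-(\lambda)=n_+\bigl(1; V^{1/2}(K+\lambda)^{-1}V^{1/2}\bigr)$. Now decompose $(K+\lambda)^{-1}=P(K+\lambda)^{-1}P + Q(K+\lambda)^{-1}$, where on $\harmo$ one has $K=0$ so $P(K+\lambda)^{-1}P=\lambda^{-1}P$, giving $V^{1/2}P(K+\lambda)^{-1}PV^{1/2}=\lambda^{-1}V^{1/2}PV^{1/2}$, an operator with the same nonzero spectrum as $\lambda^{-1}T_V$ (via $V^{1/2}PV^{1/2}$ and $PVP$ sharing nonzero eigenvalues). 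The remainder $Q(K+\lambda)^{-1}$ converges in operator norm as $\lambda\downarrow 0$ to the compact operator $R$, so $V^{1/2}Q(K+\lambda)^{-1}V^{1/2}\le V^{1/2}RV^{1/2}+o(1)$ in the appropriate sense. Applying the Weyl inequalities \eqref{23} to the splitting $\lambda^{-1}V^{1/2}PV^{1/2} + V^{1/2}Q(K+\lambda)^{-1}V^{1/2}$ with thresholds $1-\eps$ and $\eps$ then yields the two-sided bound \eqref{s26}; the lower bound $n_+(\lambda;T_V)\le\mathcal N_-(\lambda)$ follows from monotonicity since $V^{1/2}Q(K+\lambda)^{-1}V^{1/2}\ge 0$ (as $K\ge 0$), so dropping it only decreases $n_+$.

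For part (ii), the positive eigenvalues of $K+V$ in $(\lambda,\lambda_0)$, the argument is analogous but with sign changes: an eigenvalue $\lambda\in(0,\lambda_0)$ of $K+V$ corresponds to $1$ being an eigenvalue of $V^{1/2}(K-\lambda)^{-1}V^{1/2}$ — here $(K-\lambda)^{-1}$ is well-defined on a spectral subspace since $\lambda$ lies in the gap $(0,\lambda_0)$ of $\sigma(K)$ restricted away from the kernel, but $K-\lambda$ is negative on $\harmo$. One writes $(K-\lambda)^{-1} = -\lambda^{-1}P + Q(K-\lambda)^{-1}$; the first term contributes $-\lambda^{-1}V^{1/2}PV^{1/2}$ and, because a positive eigenvalue of $K+V$ in the gap must come from the buckling-type mechanism, counting reduces (after a Birman--Schwinger sign analysis, cf.\ the standard treatment of eigenvalues in a gap) to $n_+(\lambda;T_V)$ modulo the finitely many eigenvalues of $L$ below $\lambda_1$, which accounts for the additive constant $C$ and the restriction to $\lambda<\lambda_1$. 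The correction term $n_+(\eps; V^{1/2}Q(K-\lambda_1)^{-1}V^{1/2})$ arises by freezing the remainder at the fixed value $\lambda_1$ and using monotonicity of $(K-\lambda)^{-1}$ restricted to $\harmo^\perp$ in $\lambda\in(0,\lambda_1)$, together with the Weyl inequalities at threshold $\eps$.

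The main obstacle I anticipate is the careful handling of the resolvent $(K-\lambda)^{-1}$ for $\lambda$ in the gap $(0,\lambda_0)$: one must isolate the contribution of the infinite-multiplicity zero eigenvalue of $K$ from the discrete positive spectrum of $L$, verify the operator-norm convergence $Q(K+\lambda)^{-1}\to R$ and the corresponding monotone behavior on $\harmo^\perp$, and make sure the Birman--Schwinger counting identity is applied with the correct signs so that only eigenvalues emerging \emph{from} the zero eigenvalue (and not those perturbed from $\sigma(L)$) are counted — this is where the constant $C$ and the threshold $\lambda_1$ enter, and getting their roles precisely right is the delicate bookkeeping of the argument.
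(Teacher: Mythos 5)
Your plan follows essentially the same route as the paper: the Birman--Schwinger principle for ${\mathcal N}_-(\lambda)$, the splitting of the resolvent into its $P$- and $Q$-parts (with $P(K+\lambda)^{-1}=\lambda^{-1}P$), the Weyl inequalities at thresholds $1\mp\eps$ and $\eps$, monotonicity on $\harmo^\perp$ to freeze the $Q$-part, and the generalized (gap) Birman--Schwinger principle for ${\mathcal N}_+(\lambda)$ with the correction frozen at $\lambda_1$. Two points in your sketch, however, need repair before it is a proof. First, in part (i) the phrase ``$V^{1/2}Q(K+\lambda)^{-1}V^{1/2}\le V^{1/2}RV^{1/2}+o(1)$'' is not enough: \eqref{s26} is claimed for \emph{every} $\lambda>0$ with the fixed $\eps$, and an $o(1)$ error would only give $n_+\bigl(\eps-o(1);\cdot\bigr)$. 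What you need (and what the paper uses via the mini-max principle) is the exact operator inequality $Q(K+\lambda)^{-1}\le R$, valid for all $\lambda>0$ because on $\harmo^\perp$ one has $K=L>0$ and $(L+\lambda)^{-1}\le L^{-1}$; norm convergence of $Q(K+\lambda)^{-1}$ to $R$ plays no role.

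Second, in part (ii) your bookkeeping of the gap argument is off in two respects. Since $V\ge 0$, an eigenvalue $\lambda\in(0,\lambda_0)$ of $K+V$ corresponds to $-1$ (not $+1$) being an eigenvalue of $V^{1/2}(K-\lambda)^{-1}V^{1/2}$, so the relevant counting function is $n_-(1;V^{1/2}(K-\lambda)^{-1}V^{1/2})$; the upper bound in \eqref{s27} then comes from dropping the nonnegative term $V^{1/2}Q(K-\lambda)^{-1}V^{1/2}$, which \emph{decreases} $n_-$. Moreover, the constant $C$ has nothing to do with ``eigenvalues of $L$ below $\lambda_1$'' --- there are none, since $\inf\sigma(L)=\lambda_0>\lambda_1$. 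Rather, one first uses $\sigma_{\rm ess}(K+V)=V(\partial\Omega)=\{0\}$ (Theorem \ref{t2}) to choose $\lambda_1\in(0,\lambda_0)$ with $(\lambda_1,\lambda_0)\cap\sigma(K+V)=\emptyset$, so that ${\mathcal N}_+(\lambda)=\Tr\,\one_{(\lambda,\lambda_1]}(K+V)$ for $\lambda\in(0,\lambda_1)$; then the Alama--Deift--Hempel formula gives ${\mathcal N}_+(\lambda)+C=n_-(1;V^{1/2}(K-\lambda)^{-1}V^{1/2})$ with $C=n_-(1;V^{1/2}(K-\lambda_1)^{-1}V^{1/2})\in[0,\infty)$, i.e.\ $C$ is the Birman--Schwinger count at the reference energy $\lambda_1$, finite because $V^{1/2}(K-\lambda_1)^{-1}V^{1/2}$ is compact. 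With these corrections (plus the monotonicity $(L-\lambda)^{-1}\le(L-\lambda_1)^{-1}$ for $0<\lambda<\lambda_1$ to justify freezing the $Q$-term), your outline coincides with the paper's proof.
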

    \begin{proof}
    (i) By the Birman-Schwinger principle \cite[Lemma 1.1]{bir}, we have
    \bel{s28}
    {\mathcal N}_-(\lambda) = n_+(1; (K+\lambda)^{-1/2} V (K+\lambda)^{-1/2}) = n_+(1; V^{1/2} (K + \lambda)^{-1} V^{1/2}), \; \lambda > 0.
    \ee
    It follows from the mini-max principle that
    $$
    n_+(1; (K+\lambda)^{-1/2} V (K+\lambda)^{-1/2}) \geq
    $$
    $$
    n_+(1; P(K+\lambda)^{-1/2} V (K+\lambda)^{-1/2} P) = n_+(\lambda; PVP) = n_+(\lambda; T_V),
    $$
    which, combined with the first equality in \eqref{s28}, implies the lower bound in \eqref{s26}. Further, by the Weyl inequalities \eqref{23} and the elementary identity
    \bel{s50}
    n_+(s;V^{1/2} P (K+\lambda)^{-1} V^{1/2}) = n_+(s\lambda; V^{1/2} P V^{1/2}), \quad s>0, \quad \lambda > 0,
    \ee
    we have
    \bel{s29}
    n_+(1; V^{1/2} (K + \lambda)^{-1} V^{1/2}) \leq n_+((1-\eps)\lambda;  V^{1/2} P V^{1/2}) + n_+(\eps; V^{1/2} Q(K + \lambda)^{-1} V^{1/2}), \; \lambda > 0.
    \ee
    Evidently,
    \bel{s30}
    n_+(s;  V^{1/2} P V^{1/2}) = n_+(s; PVP) = n_+(s; T_V), \quad s>0,
    \ee
    while the mini-max principle easily implies that for any $\eps > 0$ we have
    \bel{s31}
    n_+(\eps; V^{1/2} Q(K + \lambda)^{-1} V^{1/2}) \leq  n_+(\eps; V^{1/2} R V^{1/2}), \quad \lambda > 0.
    \ee
    Putting together \eqref{s28} and \eqref{s29} -- \eqref{s31}, we obtain the upper bound in \eqref{s26}.\\
    (ii) Since $\lambda_0 \not \in \sigma_{\rm ess}(K + V)$ the spectrum of $K+V$ cannot accumulate at $\lambda_0$. Hence there exists $\lambda_1 \in (0,\lambda_0)$ such that $(\lambda_1,\lambda_0) \cap \sigma(K+V) = \emptyset$, and therefore
    $$
     {\mathcal N}_+(\lambda)  = {\rm Tr}\,\one_{(\lambda, \lambda_1]}(K+V), \quad \lambda \in (0,\lambda_1).
     $$
    By the generalized Birman-Schwinger principle (see e.g. \cite[Theorem 1.3]{adh}),
    $$
    {\rm Tr}\,\one_{(\lambda, \lambda_1]}(K+V) = n_-(1; V^{1/2} (K - \lambda)^{-1} V^{1/2}) - n_-(1; V^{1/2} (K - \lambda_1)^{-1} V^{1/2})
    $$
    which implies
    \bel{s32}
    {\mathcal N}_+(\lambda) + C = n_-(1; V^{1/2} (K - \lambda)^{-1} V^{1/2}), \quad  \lambda \in (0,\lambda_1),
    \ee
    with $C: = n_-(1; V^{1/2} (K - \lambda_1)^{-1} V^{1/2})$.
    By the Weyl inequalities and the identity
    \bel{s51}
    n_-(s;V^{1/2} P (K-\lambda)^{-1} V^{1/2}) = n_+(s\lambda; V^{1/2} P V^{1/2}), \quad s>0, \quad \lambda \in (0,\lambda_0),
    \ee
    which is analogous to \eqref{s50}, we have
     \bel{s33a}
    n_-(1; V^{1/2} (K - \lambda)^{-1} V^{1/2}) \geq n_+((1+\eps)\lambda;  V^{1/2} P V^{1/2}) - n_+(\eps; V^{1/2} Q(K - \lambda)^{-1} V^{1/2}).
    \ee
    Since the mini-max principle easily implies
    $$
    n_+(\eps; V^{1/2} Q(K - \lambda)^{-1} V^{1/2}) \leq n_+(\eps; V^{1/2} Q(K-\lambda_1)^{-1} V^{1/2}), \quad \lambda \in (0,\lambda_1),
    $$
    we find that \eqref{s32}, \eqref{s33a}, and \eqref{s30}, yield the lower bound in \eqref{s27}. Finally, by the mini-max principle, \eqref{s51}, and \eqref{s30}, we have
    $$
    n_-(1; V^{1/2} (K - \lambda)^{-1} V^{1/2}) \leq n_-(1; V^{1/2} P(K - \lambda)^{-1} V^{1/2})  = n_+(\lambda; T_V),
    $$
    which together with \eqref{s32}, implies the upper bound in \eqref{s27}.
    \end{proof}

    Combining Theorem \ref{t3} and the results of Section \ref{ss21}, \ref{symmetry}, and \ref{ss23}, we could obtain rich information concerning the spectrum of the operator $K \pm V$, adjoining the origin.
    %\begin{follow} \label{f1} Let $\Omega = B_1 \subset \re^d$, $d \geq 2$, $0 \leq V \in C(\overline{B}_1)$. Assume that there exist $\gamma > 0$ and $a > 0$ such that $\lim_{|x| \uparrow 1} (1-|x|)^{-\gamma} V(x) = a$.
   %Then we have
   %\bel{s40}
   %$$
   %\lim_{\lambda \downarrow 0} \lambda^{(d-1)/\gamma} \, {\mathcal N}_\pm(\lambda) = \frac{2^{-d+2}}{(d-1)!} \left(a\Gamma(\gamma+1)\right)^{(d-1)/\gamma}.
   %$$
   %\ee
   %\end{follow}
   For example, estimates \eqref{s26} -- \eqref{s27} and Theorem \ref{t1} yield  the following result:

\begin{follow} \label{f3}
    Assume that $V$ satisfies \eqref{rez1} with $\gamma > 0$. Then we have
   %\bel{s42}
   \bel{s40}
     {\mathcal C} \lambda^{-\frac{d-1}{\gamma}} + O(\lambda^{-\frac{d-2}{\gamma}}) \leq {\mathcal N}_-(\lambda) \leq {\mathcal C} \lambda^{-\frac{d-1}{\gamma}} + O(\lambda^{-\frac{d-1}{\gamma}\kappa}),
     \ee
     \bel{s41}
     {\mathcal C} \lambda^{-\frac{d-1}{\gamma}} + O(\lambda^{-\frac{d-1}{\gamma}\kappa}) \leq {\mathcal N}_+(\lambda) \leq {\mathcal C} \lambda^{-\frac{d-1}{\gamma}} + O(\lambda^{-\frac{d-2}{\gamma}}),
     \ee
     where ${\mathcal C}$ is the constant defined in \eqref{a3a}, while $\kappa = \frac{d}{d+2}$ if $2 \leq d \leq 4$, and $\kappa =  \frac{d-2}{d-1}$ if  $d \geq 4$.
     \end{follow}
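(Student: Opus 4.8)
The plan is to combine the two-sided estimates of Theorem~\ref{t3} with the precise asymptotics of $n_+(\lambda; T_V)$ from Theorem~\ref{t1} and the remainder-type bounds for the auxiliary Toeplitz-like terms appearing in \eqref{s26}--\eqref{s27}. The main point is that the operators $V^{1/2} R V^{1/2}$ and $V^{1/2} Q(K-\lambda_1)^{-1} V^{1/2}$ are $\lambda$-independent compact operators, so their counting functions $n_+(\eps;\cdot)$ control only how the free parameter $\eps$ must be chosen as a function of $\lambda$; they do not, by themselves, produce a $\lambda$-power. The entire $\lambda$-dependence of the remainder therefore comes from two competing sources: the perturbation of the leading term when one replaces $\lambda$ by $(1\pm\eps)\lambda$ in $n_+((1\pm\eps)\lambda; T_V)$, which by \eqref{a2} costs $O(\eps \lambda^{-(d-1)/\gamma})$; and the size of $n_+(\eps; V^{1/2} R V^{1/2})$ (resp.\ the analogous $Q$-term), which one must bound in terms of $\eps$ and then optimize over $\eps$.

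Concretely, I would first record the Schatten-class membership of the fixed operators. Since $0\le V\in C(\overline\Omega)$ with $V_{|\partial\Omega}=0$, the remark following Theorem~\ref{t1} (via \eqref{a} and \eqref{b}) and Proposition~\ref{p3}(ii) show that $V\in L^{(d-1)/\gamma}_{\rm w}(\Omega;d\rho)$ precisely when $V$ obeys \eqref{rez1}; but for $R$ and $Q(K-\lambda_1)^{-1}$ one should instead exploit the smoothing properties of $Q(K-i)^{-1}$ coming from Proposition~\ref{p10}: the operator $L^{-1}$ on $\harmo^\perp$ has the Weyl-type eigenvalue decay of the inverse of a second-order elliptic operator on a $d$-dimensional domain, so $L^{-1}\in S_{p,\rm w}$ for $p=d/2$, and hence $V^{1/2} R V^{1/2}\in S_{q,\rm w}$ for a suitable $q$ (the Hölder-type inequality for weak Schatten classes combined with boundedness of $V^{1/2}$ gives $q = d/2$, possibly improved because $V^{1/2}$ vanishes at $\partial\Omega$). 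From $V^{1/2}RV^{1/2}\in S_{q,\rm w}$ one gets $n_+(\eps; V^{1/2} R V^{1/2}) = O(\eps^{-q})$, and similarly for the $Q(K-\lambda_1)^{-1}$ term. The value of $q$ is exactly what produces the two cases $\kappa=\frac{d}{d+2}$ and $\kappa=\frac{d-2}{d-1}$: comparing $q = d/2$ with the exponent $(d-1)/\gamma$ arising from $T_V$, and balancing $\eps\lambda^{-(d-1)/\gamma}$ against $\eps^{-q}$, one chooses $\eps = \lambda^{\theta}$ with $\theta$ optimal, yielding a remainder of order $\lambda^{-(d-1)/\gamma\cdot\kappa}$ with $\kappa$ depending on whether $q$ or a competing exponent dominates.

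Thus the skeleton is: (1) plug \eqref{a2} into the lower and upper bounds of \eqref{s26}, obtaining $\mathcal{N}_-(\lambda)\ge \mathcal{C}\lambda^{-(d-1)/\gamma}(1+O(\lambda^{1/\gamma}))$ for free (which already gives the left inequality in \eqref{s40} since $O(\lambda^{1/\gamma}\cdot\lambda^{-(d-1)/\gamma}) = O(\lambda^{-(d-2)/\gamma})$), and for the upper bound expand $n_+((1-\eps)\lambda; T_V) = \mathcal{C}\lambda^{-(d-1)/\gamma} + O(\eps\lambda^{-(d-1)/\gamma}) + O(\lambda^{-(d-2)/\gamma})$; (2) bound the fixed-operator term by $O(\eps^{-q})$; (3) choose $\eps=\lambda^{\sigma}$ minimizing $\max(\lambda^{\sigma-(d-1)/\gamma},\lambda^{-\sigma q})$, giving $\sigma = \frac{(d-1)/\gamma}{1+q}$ and a remainder exponent $-\frac{(d-1)}{\gamma}\cdot\frac{q}{1+q}$; then identify $\kappa = \max\!\left(\frac{q}{1+q}, \frac{d-2}{d-1}\right)$, which is $\frac{d}{d+2}$ when $q=d/2$ dominates (i.e.\ $2\le d\le 4$) and $\frac{d-2}{d-1}$ when the trivial $O(\lambda^{-(d-2)/\gamma})$ term from \eqref{o2}-type expansion dominates ($d\ge 4$); (4) repeat verbatim for $\mathcal{N}_+(\lambda)$ using \eqref{s27}, noting the roles of the two sides are swapped, which is why \eqref{s41} has the sharp remainder $O(\lambda^{-(d-2)/\gamma})$ on the \emph{upper} side and the $\kappa$-remainder on the \emph{lower} side.

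The main obstacle I anticipate is pinning down the exact weak-Schatten exponent $q$ for $V^{1/2}RV^{1/2}$ (and for $V^{1/2}Q(K-\lambda_1)^{-1}V^{1/2}$), since $R = \text{u-}\lim_{\lambda\to 0} Q(K+\lambda)^{-1}$ is the ``regular part'' of the resolvent on $\harmo^\perp$ and one must justify that its singular values inherit the $j^{-2/d}$ decay of an inverse second-order elliptic operator on $\Omega\subset\re^d$ — essentially a Weyl law for $L$ — and then track how multiplication by $V^{1/2}$, which vanishes to order $\gamma/2$ at $\partial\Omega$, affects (and under \eqref{rez1} may only marginally affect) that decay. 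Once $q=d/2$ is established, the optimization in step (3) is elementary, and the dichotomy $2\le d\le 4$ versus $d\ge 4$ falls out of comparing $\frac{d/2}{1+d/2}=\frac{d}{d+2}$ with $\frac{d-2}{d-1}$, which cross at $d=4$.
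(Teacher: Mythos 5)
Your proposal is correct and follows essentially the same route as the paper: insert the asymptotics \eqref{a2} into \eqref{s26}--\eqref{s27}, bound the $\lambda$-independent error terms by $O(\eps^{-d/2})$, and optimize with $\eps=\lambda^{\theta}$, $\theta=\frac{2(d-1)}{\gamma(d+2)}$, which yields exactly the dichotomy $\kappa=\max\bigl(\frac{d}{d+2},\frac{d-2}{d-1}\bigr)$. The Weyl-law input that you flag as the main obstacle is precisely what the paper imports from \cite{g2} --- after reducing $n_+(\eps;V^{1/2}Q(K-\lambda_1)^{-1}V^{1/2})$ to ${\rm Tr}\,\one_{(-\infty,E)}(L)$ with $E=\lambda_1+\eps^{-1}\max_{\overline{\Omega}}V$ via the Birman--Schwinger principle, rather than through weak Schatten-class membership of $R$ as you suggest; the two mechanisms are equivalent here since both rest on ${\rm Tr}\,\one_{(-\infty,E)}(L)=O(E^{d/2})$.
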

     \begin{proof}
     First,  the mini-max principle and the Birman-Schwinger principle entail
     \bel{s42}
     n_+(\eps; V^{1/2} R V^{1/2}) \leq n_+(\eps; V^{1/2} Q(K-\lambda_1)^{-1} V^{1/2}) \leq {\rm Tr}\,\one_{(-\infty, E)}(L),
     \ee
     with $E = \lambda_1 + \eps^{-1} \max_{x \in \overline{\Omega}} V(x)$, $\lambda_1$ being introduced in the statement of Theorem \ref{t3} (ii). It follows from the results of \cite{g2}
     that
     \bel{s43}
     {\rm Tr}\,\one_{(-\infty, E)}(L) = O(E^{d/2}), \quad E \to \infty.
     \ee
     Now pick $\eps = \lambda^{\theta}$ with appropriate $\theta>0$ to be fixed later. Then  \eqref{s26} -- \eqref{s27}, \eqref{a2}, and \eqref{s42} - \eqref{s43} yield
     %\bel{s44}
     $$
     {\mathcal C} \lambda^{-\frac{d-1}{\gamma}} + O(\lambda^{-\frac{d-2}{\gamma}}) \leq {\mathcal N}_-(\lambda) \leq {\mathcal C} \lambda^{-\frac{d-1}{\gamma}}  + O(\lambda^{-\frac{d-2}{\gamma}}) + O(\lambda^{-\frac{d-1}{\gamma} + \theta}) + O(\lambda^{-\frac{d}{2}\theta}),
     %\ee
     $$
     %\bel{s45}
     $$
     {\mathcal C} \lambda^{-\frac{d-1}{\gamma}}  + O(\lambda^{-\frac{d-2}{\gamma}}) + O(\lambda^{-\frac{d-1}{\gamma} + \theta}) + O(\lambda^{-\frac{d}{2}\theta}) \leq {\mathcal N}_+(\lambda)   \leq {\mathcal C} \lambda^{-\frac{d-1}{\gamma}} + O(\lambda^{-\frac{d-2}{\gamma}}).
     %\ee
     $$
     Picking $\theta = \frac{2(d-1)}{\gamma(d+2)}$, we arrive at \eqref{s40}--\eqref{s41}.
     \end{proof}
     Similarly, estimates \eqref{s26} -- \eqref{s27} with $ \eps \in (0,1)$ fixed, and Proposition \ref{p6} entail

\begin{follow} \label{f2} Let $\Omega = B_1 \subset \re^d$, $d \geq 2$, $0 \leq V \in C(\overline{B}_1)$. Assume that   ${\rm supp}\,V = \overline{B_c}$ for some $c \in (0,1)$, and that for any $\delta \in (0,c)$ we have ${\rm inf}_{x \in B_\delta} V(x) > 0$. Then
  % \bel{s41}
  $$
   \lim_{{\lambda} \downarrow 0} |\ln{\lambda}|^{-d+1} \, {\mathcal N}_\pm(\lambda) = \frac{2^{-d+2}}{(d-1)!|\ln{c}|^{d-1}}.
   $$
   %\ee
   \end{follow}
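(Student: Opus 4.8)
The plan is to combine the two-sided bounds of Theorem \ref{t3} with the logarithmic eigenvalue asymptotics for radially-truncated Toeplitz operators encoded in Proposition \ref{p6}, together with the control of the auxiliary terms $n_+(\eps; V^{1/2}RV^{1/2})$ and $n_+(\eps; V^{1/2}Q(K-\lambda_1)^{-1}V^{1/2})$ provided by the discreteness of $\sigma(L)$ (Proposition \ref{p10}). Since here $\eps$ is kept \emph{fixed} in $(0,1)$, these auxiliary terms are simply finite constants, so they will not affect a leading-order asymptotics that grows like $|\ln\lambda|^{d-1}$.

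First I would fix an arbitrary $\eps \in (0,1)$. By \eqref{s42}, both $n_+(\eps; V^{1/2}RV^{1/2})$ and $n_+(\eps; V^{1/2}Q(K-\lambda_1)^{-1}V^{1/2})$ are bounded above by $\mathrm{Tr}\,\one_{(-\infty,E)}(L)$ with $E = \lambda_1 + \eps^{-1}\max_{\overline{\Omega}}V$ finite; since $\sigma(L)$ is purely discrete, this trace is a finite number $C_\eps$ independent of $\lambda$. Hence Theorem \ref{t3}(i) gives
\bel{sf2a}
n_+(\lambda; T_V) \leq {\mathcal N}_-(\lambda) \leq n_+((1-\eps)\lambda; T_V) + C_\eps, \quad \lambda > 0,
\ee
and Theorem \ref{t3}(ii) gives, for $\lambda \in (0,\lambda_1)$,
\bel{sf2b}
n_+((1+\eps)\lambda; T_V) - C_\eps \leq {\mathcal N}_+(\lambda) + C \leq n_+(\lambda; T_V),
\ee
where $C$ is the fixed constant from \eqref{s32}.

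Next I would invoke Proposition \ref{p6}, whose hypotheses coincide exactly with those of the present corollary, to get
$$
\lim_{\lambda \downarrow 0} |\ln\lambda|^{-d+1} n_+(\lambda; T_V) = \frac{2^{-d+2}}{(d-1)!|\ln c|^{d-1}} =: {\mathcal C}_0.
$$
Since $|\ln((1\pm\eps)\lambda)|^{-d+1} = |\ln\lambda|^{-d+1}(1+o(1))$ as $\lambda \downarrow 0$ for fixed $\eps$, we also have $\lim_{\lambda\downarrow 0}|\ln\lambda|^{-d+1} n_+((1\pm\eps)\lambda; T_V) = {\mathcal C}_0$. Dividing \eqref{sf2a} and \eqref{sf2b} by $|\ln\lambda|^{d-1}$, letting $\lambda \downarrow 0$, and noting that the additive constants $C_\eps$ and $C$ contribute $o(|\ln\lambda|^{d-1})$, we obtain both
$$
\lim_{\lambda \downarrow 0} |\ln\lambda|^{-d+1} {\mathcal N}_-(\lambda) = {\mathcal C}_0, \qquad
\lim_{\lambda \downarrow 0} |\ln\lambda|^{-d+1} {\mathcal N}_+(\lambda) = {\mathcal C}_0,
$$
which is the claimed identity for ${\mathcal N}_\pm$. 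The only mildly delicate point — which is really just bookkeeping — is checking that the constant $\lambda_1$ and the finite trace bound are genuinely independent of $\lambda$ (they are, since $\lambda_1$ depends only on the gap structure of $\sigma(K+V)$ near $\lambda_0$, and $E$ above is $\lambda$-independent once $\eps$ is fixed); there is no substantive analytic obstacle, as all the hard work is already contained in Theorem \ref{t3} and Proposition \ref{p6}.
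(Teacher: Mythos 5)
Your argument is correct and is exactly the route the paper intends: Corollary \ref{f2} is stated there as following from \eqref{s26}--\eqref{s27} with $\eps\in(0,1)$ fixed together with Proposition \ref{p6}, and your write-up simply makes explicit the details (the auxiliary terms are $\lambda$-independent finite constants by \eqref{s42} and the discreteness of $\sigma(L)$, and $|\ln((1\pm\eps)\lambda)|\sim|\ln\lambda|$), which is all that is needed.
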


\vspace{1cm}

{\bf Acknowledgements.} Considerable parts of this work have been done during the second author's visit to the University of Bordeaux, France, in 2014 and 2017, and to the Institute of Mathematics, Bulgarian Academy of Sciences in 2015 and 2016.
He thanks these institutions for hospitality and financial support. \\
Both authors gratefully acknowledge the partial
support of the French Research Project ANR-2011-BS01019-01 and of the Chilean Scientific Foundation {\em Fondecyt}
under Grant 1170816.\\

\end{document}